\title{On Pellarin's $L$-Series}
\author{Rudolph Bronson Perkins}
\begin{document}
\newtheorem{thm}{Theorem}[subsection]
\newtheorem*{thm*}{Theorem}
\newtheorem{pr}[thm]{Proposition}
\newtheorem{lem}[thm]{Lemma}
\newtheorem{cor}[thm]{Corollary}
\newtheorem*{cor*}{Corollary}
\theoremstyle{remark}
\newtheorem{rem}[thm]{Remark}
\newtheorem{defn}[thm]{Definition}
\newtheorem{ex}[thm]{Example}

\begin{abstract}
Necessary and sufficient conditions are given for a negative integer to be a trivial zero of a new type of $L$-series recently discovered by F. Pellarin, and it is shown that any such trivial zero is simple. We determine the exact degree of the special polynomials associated to Pellarin's $L$-series. The theory of Carlitz polynomial approximations is developed further for both additive and $\mathbb{F}_q$-linear functions. Using Carlitz' theory we give generating series for the power sums occurring as the coefficients of the special polynomials associated to Pellarin's series, and a connection is made between the Wagner representation for $\chi_t$ and the value of Pellarin's $L$-series at 1.
\end{abstract}
\maketitle

\section{Introduction}
Let $A := \mathbb{F}_q[\theta]$, where $\theta$ is an indeterminate and $\mathbb{F}_q$ is the finite field with $q$ elements of characteristic $p$. Let $K_\infty := \mathbb{F}_q((\frac{1}{\theta}))$ be the completion of $K := \mathbb{F}_q(\theta)$, the fraction field of $A$, equipped with the absolute value $|\cdot|$ normalized so that $|\theta| = q$. Let ${\mathbb C}_\infty$ be the completion of the algebraic closure of $K_\infty$ equipped with the canonical extension of $|\cdot|$. For positive integers $s$ and for $t$ in ${\mathbb C}_\infty$ such that $|t|< |\theta|$, Pellarin defines the following characteristic $p$ valued $L$-series \cite{Pe-11},
$$L(\chi_t, s) := \sum_{a \in A_+} \chi_t(a) a^{-s},$$
where $A_+$ is the space of all monic polynomials in $A$ and $\chi_t$ is the $\mathbb{F}_q$-algebra morphism of evaluation mapping $\theta$ to $t$. Remarkably, using the theory of deformations of vectorial modular forms he is able to associate this $L$-series at certain positive integers to the function 
$$\omega(t) := (-\theta)^{\frac{1}{q-1}}\prod_{i = 0}^\infty \left( 1 - \frac{t}{\theta^{q^i}}\right)^{-1}$$ of Anderson and Thakur \cite{AT-90} (denoted there by $\omega_1$). This function is defined by this formula for $t \in \mathbb{C}_\infty$ such that $|t|<|\theta|$ and depends on a choice of $(q-1)$-th root of $-\theta \in \mathbb{C}_\infty$. This establishes a long desired connection between $L$-series and modular forms in positive characteristic and also between $L$-series and the function $\omega(t)$. These functions $L(\chi_t,s)$ may be analytically continued, in the rigid sense, in the variable $t$ to all of $\mathbb{C}_\infty$, and for $t = \theta$ and certain positive integers $s$ Pellarin's results specialize to the computations of Carlitz from the 1930's of the values of the Goss-Carlitz zeta function at the ``even'' positive integers. 

Because of these connections, Goss was lead to show that Pellarin's series fit into his very general theory of $L$-series which we now briefly describe. Let $\pi  = \theta^{-1}$ be a uniformizer at $\infty$, and for $a \in A$ we define $\left< a \right> := \pi^{\deg(a)}a$, which is a $1$-unit in $K_\infty$. Let $A_+(d)$ be the subset of $A$ of monic polynomials of degree $d$. In \cite{Go-11}, Goss defines the formal series
$$L(\chi_t^\beta,x,y) := \sum_{d = 0}^\infty x^{-d} \sum_{a \in A_+(d)} \chi_t(a)^\beta\left< a \right>^{-y},$$ and shows for $\beta = 1$ that these series give a continuous family of entire power series in the variables $x,t \in \mathbb{C}_\infty$ parameterized by $y \in \mathbb{Z}_p$. These properties also hold for all positive integers $\beta$, see \cite{Pe-11} Remark 7. To these series he also associates the \emph{special polynomials} at the non-positive integers, 
$$z(\chi_t^\beta, x, -k) := L(\chi_t^\beta,x\pi^{k},-k) = \sum_{e = 0}^\infty x^{-e} \left( \sum_{a \in A_+(e)} \chi_t(a)^\beta a^{k} \right),$$
where, by \cite{Go-11}, the sum in parenthesis vanishes for $e \gg 0$. It is our aim to study these special polynomials when $\beta > 0$, and to prove results which are analogous to what is known for the special polynomials associated to Goss' original extension of Carlitz' zeta function. 

We give a short review of the results concerning the Goss-Carlitz zeta function which are most relevant to what we shall prove in this note. Since Carlitz' in the 30's several people have studied the \emph{power sums}, defined for non-negative integers $k$ and $d$ by
$$S_d(k) := \sum_{a \in A_+(d)}a^k.$$
The next theorem which gives necessary and sufficient conditions for the non-vanishing of the power sums $S_d(k)$ was known to Carlitz but given proof many years later by Sheats \cite{Sh98}.

\begin{thm*}[Sheats \cite{Sh98}, Theorem 1.4 (a)]
The sum $S_d(k)$ is non-zero if and only if there exists a $(d+1)$-tuple $(i_0,i_1,\dots,i_d)$ of non-negative integers such that
\begin{enumerate}
\item $\sum_{j = 0}^d i_j = k$,
\item There is no carry over of $p$-adic digits in the sum $\sum_{j = 0}^d i_j$, and
\item $i_j > 0$ and $(q - 1) \ | \ i_j$ for $0 \leq j \leq d-1$.
\end{enumerate}
\end{thm*}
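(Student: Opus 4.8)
The plan is to expand $S_d(k)$ directly by the multinomial theorem and to reduce the vanishing question to two classical congruences, after which the only genuine work is to rule out cancellation. Writing a monic $a \in A_+(d)$ as $a = \theta^d + \sum_{j=0}^{d-1} c_j \theta^j$ with the coefficients $c_j$ ranging freely over $\mathbb{F}_q$, the multinomial theorem gives
$$a^k = \sum_{i_0 + \cdots + i_d = k} \binom{k}{i_0, \ldots, i_d}\, \theta^{\sum_{j=0}^d j\, i_j} \prod_{j=0}^{d-1} c_j^{\,i_j},$$
the top coefficient contributing $c_d = 1$. Summing over $A_+(d)$ is the same as letting each $c_j$ run over $\mathbb{F}_q$, whence
$$S_d(k) = \sum_{i_0 + \cdots + i_d = k} \binom{k}{i_0, \ldots, i_d}\, \theta^{\sum_{j=0}^d j\, i_j} \prod_{j=0}^{d-1} \Big( \sum_{c \in \mathbb{F}_q} c^{\,i_j} \Big).$$

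First I would use the elementary identity that $\sum_{c \in \mathbb{F}_q} c^{\,i}$ equals $-1$ when $i > 0$ and $(q-1) \mid i$, and vanishes otherwise (the case $i = 0$ giving $q = 0$ in characteristic $p$); thus each inner factor is non-zero exactly when condition (3) holds at the index $j$, in which case the product equals $(-1)^d$. Next I would invoke Lucas' theorem (equivalently Kummer's on carries) to see that $\binom{k}{i_0, \ldots, i_d}$ is non-zero in characteristic $p$ if and only if there is no carry when the $i_j$ are added in base $p$, i.e.\ condition (2). Together with the defining identity $\sum_j i_j = k$ of condition (1), this shows that the surviving terms of $S_d(k)$ are indexed precisely by the tuples satisfying (1)--(3). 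In particular, if no such tuple exists then every term vanishes and $S_d(k) = 0$; this is the necessity half of the theorem.

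The content of the statement is the converse: when at least one admissible tuple exists, I must show that the surviving terms do not conspire to cancel. Because distinct admissible tuples can produce the same $\theta$-degree $\sum_{j} j\, i_j$, cancellation is a real danger, and it suffices to exhibit one power of $\theta$ whose coefficient is provably non-zero. The natural choice is the top degree: let $\deg_\theta S_d(k)$ denote the maximum of $\sum_{j} j\, i_j$ over all tuples satisfying (1)--(3), and the goal is to prove that this maximum is attained by a \emph{unique} admissible tuple. Granting this, the leading coefficient is a single term $(-1)^d \binom{k}{i_0, \ldots, i_d} \ne 0$, and therefore $S_d(k) \ne 0$.

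The main obstacle is exactly this uniqueness, and it is where the combinatorial heart of Sheats' argument lies. I would attack it by a greedy construction of the weight-maximizing tuple: to enlarge $\sum_j j\, i_j$ one wants to place as much mass as possible in the unconstrained top slot $i_d$ (of largest weight $d$), leaving only the forced minimal positive multiples of $q-1$ in the lower slots, and then correcting this allocation so as to satisfy the carry-free condition. One then shows that such a greedy tuple is admissible, realizes the maximal weight, and is the only tuple to do so; the delicate step is reconciling divisibility by $q-1$ with the base-$p$ carry-free condition, which is precisely what makes the general prime-power case (Sheats) substantially harder than the case $q = p$. An explicit formula for $\deg_\theta S_d(k)$ falls out of the same analysis.
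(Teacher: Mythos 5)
The statement you are proving is quoted in the paper from Sheats \cite{Sh98}; the paper gives no proof of its own, so the only question is whether your argument stands on its own. Your reduction is correct and is the classical one going back to Carlitz: expanding $a^k$ by the multinomial theorem, summing the coefficients $c_j$ over $\mathbb{F}_q$, using $\sum_{c\in\mathbb{F}_q}c^i=-1$ exactly when $i>0$ and $(q-1)\mid i$ (and $0$ otherwise), and using Lucas' theorem for the multinomial coefficient correctly identifies the surviving terms with the tuples satisfying (1)--(3). This gives the easy implication: no admissible tuple implies $S_d(k)=0$. Your strategy for the converse --- show that the maximal value of the weight $\sum_j j\,i_j$ over admissible tuples is attained by a unique tuple, so that the top $\theta$-coefficient of $S_d(k)$ is a single non-zero multinomial coefficient times $(-1)^d$ --- is also the right one, and is indeed the route Sheats takes.

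The gap is that this uniqueness claim is asserted, not proved, and it is the entire content of Sheats' paper rather than a finishing touch. Your greedy description (put the minimal positive multiples of $q-1$ in the slots $i_0,\dots,i_{d-1}$ and dump the rest into $i_d$) does not in general produce a tuple satisfying the carry-free condition (2), and the ``correction'' you allude to is exactly where the difficulty lives: one must show that among all ways of distributing the base-$p$ digits of $k$ into $d+1$ blocks with the lower $d$ blocks non-zero multiples of $q-1$, the weight functional $\sum_j j\,i_j$ has a unique maximizer. For $q=p$ this can be done by a direct digit-by-digit argument (Diaz-Vargas), but for general prime powers $q$ the interaction between divisibility by $q-1$ and carry-freeness in base $p$ requires Sheats' inductive analysis of ``optimal compositions,'' and earlier attempts at exactly this step were found to be flawed --- which is why the result, though known to Carlitz, waited sixty years for a proof. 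As written, your argument proves only the vanishing direction; the non-vanishing direction rests on an unproven (though true) combinatorial theorem, so you should either supply that argument in full or cite Sheats for it, as the paper does.
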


Sheats' theorem is one of the main ingredients in the proof of the Riemann Hypothesis for Goss' zeta function associated to $\mathbb{F}_q[\theta]$ for general $q$. B\"ockle used this theorem to give an exact degree in $x^{-1}$ of the special polynomials associated to Goss' zeta function
$$z(x,-k) := \sum_{d = 0}^\infty x^{-d} S_d(k).$$

Let $\log_p$ denote the logarithm in base $p$, and for real numbers $x$, let $\lfloor x \rfloor$ denote the greatest integer less than or equal to $x$. For positive integers $\alpha$, let $l(\alpha)$ be the sum of the base-$q$ digits of $\alpha$.

\begin{cor*}[B\"ockle \cite{Bo-12}, Theorem 1.2 (a)]
The degree in $x^{-1}$ of $z(x,-k)$ equals 
$$\min_{0 \leq i < \log_p(q)} \left\lfloor \frac{l(p^ik)}{q-1} \right\rfloor.$$
\end{cor*}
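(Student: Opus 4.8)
The plan is to translate the degree into a purely combinatorial extremal problem and solve it by matching an upper and a lower bound. Since $z(x,-k) = \sum_{d\ge 0} x^{-d} S_d(k)$ and $S_0(k)=1\neq 0$, the degree in $x^{-1}$ is exactly $D(k) := \max\{d : S_d(k)\neq 0\}$. Write $q = p^r$, so $r = \log_p(q)$ and the minimum runs over $0\le i<r$. Two preliminary observations organize everything. First, by the ``freshman's dream'' $S_d(pk) = \big(\sum_{a\in A_+(d)} a^k\big)^p = S_d(k)^p$, so $S_d(k)=0 \iff S_d(p^ik)=0$ and hence $D(k)=D(p^ik)$ for all $i$. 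Second, multiplying by $q$ shifts base-$q$ digits, so $l(p^{i+r}k)=l(p^ik)$ and the sequence $l(p^ik)$ is periodic of period dividing $r$; thus the minimum over $0\le i<r$ sees all Frobenius twists. Set $d^\star := \min_{0\le i<r}\lfloor l(p^ik)/(q-1)\rfloor$; I must show $D(k)=d^\star$.

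For the upper bound I would apply Sheats' criterion to each twist $m=p^ik$. If $S_d(m)\neq 0$ then $m = i_0+\cdots+i_d$ with no carry of $p$-adic digits, where $i_0,\dots,i_{d-1}$ are positive and divisible by $q-1$. The key step is that absence of base-$p$ carries forces absence of base-$q$ carries: at each base-$p$ position the digits of the $i_j$ sum to the digit of $m$ (which is $\le p-1$), and each base-$q$ digit is the fixed combination $\sum_{\ell=0}^{r-1}(\cdot)\,p^\ell$ of $r$ consecutive base-$p$ digits, so the base-$q$ digits of the $i_j$ add up position-by-position to those of $m$ without carrying. Therefore $l(m)=\sum_{j=0}^d l(i_j)$. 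Each $i_j$ with $j<d$ is a positive multiple of $q-1$, so $l(i_j)\equiv 0 \pmod{q-1}$ and $l(i_j)\ge 1$, whence $l(i_j)\ge q-1$. This gives $l(m)\ge d(q-1)$, i.e. $d\le \lfloor l(p^ik)/(q-1)\rfloor$ for every $i$. Since $D(k)=D(p^ik)$, taking the minimum over $i$ yields $D(k)\le d^\star$.

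For the lower bound I would reinterpret a no-$p$-carry decomposition as a partition of the multiset of base-$p$ ``digit units'' of $k$, where base-$p$ position $m$ contributes $c_m$ units of value $p^m$; any such partition automatically reconstitutes $k$ without carries, and a group is a positive multiple of $q-1$ precisely when $\sum_\rho a_\rho p^\rho \equiv 0 \pmod{q-1}$, where $a_\rho$ counts the units in the group lying in residue class $\rho$ modulo $r$. Writing $n_\rho := \sum_{m\equiv\rho\, (r)} c_m$, a direct computation gives $l(p^ik)=\sum_{\rho=0}^{r-1} n_\rho\, p^{(\rho+i)\bmod r}$, so $d^\star$ is the floor of the minimal cyclic rotation of the weight vector $(1,p,\dots,p^{r-1})$ against $(n_0,\dots,n_{r-1})$, divided by $q-1$. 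The goal is thus to partition the units into $d^\star$ groups of residue $0$. The clean case is when all $n_\rho\ge p-1$: the ``symmetric block'' taking $p-1$ units from each class has total residue $\sum_\rho (p-1)p^\rho = q-1\equiv 0$, and removing it lowers every $l(p^ik)$ by exactly $(p-1)\sum_\rho p^{(\rho+i)\bmod r}=q-1$, hence lowers $d^\star$ by one. Extracting symmetric blocks and inducting then produces the required $d^\star$ groups.

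The hard part will be the unbalanced case, where some class has $n_\rho<p-1$ and no symmetric block is available. Here I would exploit the carry relation that $p$ units of class $\rho$ have the same residue as one unit of class $\rho+1$, allowing a deficient class to be compensated by a surplus in an adjacent class at geometric cost; controlling this cascade so that each extracted group still drops $\min_i l(p^ik)$ by exactly $q-1$ is the crux and is exactly the content that Sheats' theorem is designed to supply. Concretely, I expect to invoke the greedy construction of Sheats' optimal tuples to build, twist by twist, a group of base-$q$ digit sum $q-1$ whenever $\min_i l(p^ik)\ge q-1$, thereby driving the induction to completion and yielding $D(k)\ge d^\star$. Combining the two bounds gives $D(k)=d^\star$, which is the asserted degree.
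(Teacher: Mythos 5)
First, a point of comparison: the paper does not prove this statement at all --- it is quoted as a known result of B\"ockle (\cite{Bo-12}, Theorem 1.2 (a)), itself resting on Sheats' criterion. So your proposal is measured against what a complete proof would require, not against an argument in the text. Your upper bound is correct and complete: the Frobenius identity $S_d(p^ik)=S_d(k)^{p^i}$ lets you apply Sheats' criterion to every twist, the observation that absence of base-$p$ carries forces absence of base-$q$ carries gives $l(p^ik)=\sum_j l(i_j)$, and $i_j>0$ with $(q-1)\mid i_j$ forces $l(i_j)\ge q-1$ since $i_j\equiv l(i_j)\pmod{q-1}$. This cleanly yields $\deg_{x^{-1}}z(x,-k)\le\phi$, where $\phi$ denotes the claimed minimum.

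The lower bound, however, has a genuine gap, and it is the hard half of B\"ockle's theorem. Your reduction to the token/partition problem is sound, and the ``symmetric block'' extraction works when every residue class $\rho$ has $n_\rho\ge p-1$; but that hypothesis fails generically (and can fail after a single extraction even if it held initially). For instance with $q=4$, $p=2$ and $k=21=1+4+16$ one has $n_1=0$, so no symmetric block exists, yet $\phi=1$ and a valid group (three class-$0$ tokens, residue $3\equiv 0\bmod 3$) must be produced by the compensation mechanism you only gesture at. Your plan to ``invoke the greedy construction of Sheats' optimal tuples'' is not available from the criterion as quoted --- Sheats' theorem supplies an existence criterion, not a construction achieving the extremal $d$ --- so the cascade argument (trading $p$ tokens of class $\rho$ for the residue of one token of class $\rho+1$, while certifying that $\min_i\lambda_i$ drops by exactly $q-1$ at each step) must actually be carried out. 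Until that induction is closed, you have proved only $\deg_{x^{-1}}z(x,-k)\le\phi$, not equality; the missing combinatorial lemma is precisely the content of B\"ockle's proof in \cite{Bo-12}.
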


Using a specialization argument suggested by B\"ockle we are able to prove in Theorem \ref{exactdegree} that the degree in $x^{-1}$ of the special polynomial $z(\chi_t^\beta,x,-k)$ is 
$$\min_{0 \leq i < \log_p(q)} \left\lfloor \frac{l(p^i \beta) + l(p^i k)}{q-1} \right\rfloor.$$
One easily sees that $l(k) \leq (q-1)(\log_q(k)+1)$, and for fixed $\beta$ this implies the logarithmic growth of the degrees in $x^{-1}$ of the special polynomials associated to Goss' zeta function.

In \cite{Bo-02}, B\"ockle showed under very general circumstances that the degrees of the special polynomials associated to $L$-series of Drinfeld modules grow logarithmically. Using this logarithmic growth Goss \cite{Go-05} was then able to give continuous-analytic continuation in a sense similar to what we have described above for Pellarin's series to these more general $L$-series via methods of integration in positive characteristic. 

A large portion of this paper is devoted to developing Carlitz' theory of polynomial interpolations of $\mathbb{F}_q$-linear functions on the sets $A_+(d)$. As a first application of this theory, for certain positive integers $\beta$, we are able to give a sufficient condition by means of generating series for the vanishing of the multivariate power sums
 $$S_d(\chi_t^\beta,k) := \sum_{a \in A_+(d)} \chi_t(a)^\beta a^{k}$$
 which appear as the coefficients of the powers of $x^{-1}$ in the special polynomials associated to Pellarin's $L$-series, and this condition is strong enough to recover the logarithmic growth of the degrees in $x^{-1}$ of the polynomials $z(\chi_t^\beta,x,-k)$ for these certain $\beta$. 

As a second application, we also use Carlitz' interpolation polynomials in Theorem \ref{chicoeffs} to show that for all non-negative integers $d$ we have
$$\sum_{a \in A_+(d)}\frac{\chi_t(a)}{a} = (-1)^{d}\frac{b_d(\chi_t)}{L_d},$$
where $L_i$ is the least common multiple of all polynomials of degree $i$ in $A$ and $b_d(\chi_t)$ is the $d$-th Wagner coefficient for the $\mathbb{F}_q$-linear function $\chi_t$ defined in Section \ref{sec:wag}. This turns out to be related to an important connection between the Carlitz logarithm and the value $L(\chi_t,1)$ which was known to F. Pellarin and developed in Section 4 of \cite{Pe-11}. In a future work of the author we find closed form expressions for the degree $d$ part of values of a slightly generalized version of Pellarin's $L$-series at various positive integers using these interpolation polynomials.

First we present a recursive formula for $z(\chi_t^\beta, x, -k)$. This will show, as Goss does in his note, that at the negative integers these series actually live in $A[t][x^{-1}]$ and, upon substituting $x = 1$, will give necessary and sufficient conditions for a negative integer $-k$ to be a zero (called by Goss a \emph{trivial zero}) of Pellarin's $L$-series. Furthermore, for such negative integers $-k$ we will show that the zeros at $x = 1$ of $z(\chi_t^\beta,x,-k)$ are simple. 

The familiar reader will surely see both Carlitz' and Goss' techniques sprinkled throughout this work. It is the author's great pleasure to thank David Goss for suggesting the topic of this paper and for his enthusiasm which fueled much of this work. His many comments and suggestions have added much of value to both the author's and this paper's development. The author would also like to thank the referee for many suggestions which helped improve this paper. Finally, I thank Chelsea Perkins for her constant hard work and selflessness which gives me the freedom to do such enjoyable things.

\section{Recursive Formulas and Trivial Zeros}
The notation from the introduction remains in effect. In all that happens below $k$ and $\beta$ will be arbitrary non-negative integers, $t$ an indeterminate. 

\begin{pr} \label{pr:zrecurs}
Let $\beta$ and $k$ be non-negative integers. Let 
$$X := \{(\alpha, l) \in \mathbb{Z}^2 : 0 \leq \alpha \leq \beta, 0 \leq l \leq k, (\alpha,l) \neq (\beta, k) \text{ and } (q-1) | (\beta + k -(\alpha + l)) \}.$$ 
Then the following recursive formula holds: $z(\chi_t^0, x, 0) = 1$, and
\begin{eqnarray} \label{eqn:zrecurs}
z(\chi_t^\beta,x,-k) = 1 - x^{-1} \sum_{(\alpha,l) \in X}{\beta \choose \alpha}{k \choose l}t^\alpha \theta^k  z(\chi_t^\alpha, x, -l).
\end{eqnarray}
\begin{proof}
This is an elementary calculation built on the observation that each element $a$ in $A_+(d)$ may be obtained from an element $h$ in $A_+(d-1)$ and an element $b$ in $\mathbb{F}_q$ by writing $a = \theta h + b$ and $a(t) = t h(t)+b$, where here we denote the image of $\chi_t(a)$ by $a(t)$ for all $a \in A$. One then uses the binomial theorem to decompose the summand. Interchanging sums and counting finishes the proof.
\end{proof}
\end{pr}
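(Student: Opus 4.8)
The plan is to establish the identity first at the level of the individual power sums $S_d(\chi_t^\beta,k)=\sum_{a\in A_+(d)}\chi_t(a)^\beta a^k$ and only then assemble the generating series in $x^{-1}$. The engine is the bijection between $A_+(d)$ and $A_+(d-1)\times\mathbb{F}_q$ sending $a$ to the pair $(h,b)$ determined by $a=\theta h+b$, where $h\in A_+(d-1)$ is obtained by deleting the constant term of $a$ and dividing by $\theta$, and $b\in\mathbb{F}_q$ is that constant term; applying $\chi_t$ gives $a(t)=t\,h(t)+b$. For $d\ge 1$ this rewrites $S_d(\chi_t^\beta,k)$ as a double sum over $h\in A_+(d-1)$ and $b\in\mathbb{F}_q$ of $(t\,h(t)+b)^\beta(\theta h+b)^k$.

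Next I would expand each factor by the binomial theorem, $(t\,h(t)+b)^\beta=\sum_{\alpha=0}^\beta\binom{\beta}{\alpha}t^\alpha h(t)^\alpha b^{\beta-\alpha}$ and $(\theta h+b)^k=\sum_{l=0}^k\binom{k}{l}\theta^l h^l b^{k-l}$, so that the $(\alpha,l)$-summand carries the monomial $t^\alpha\theta^l$, the factor $h(t)^\alpha h^l$, and the power $b^{(\beta+k)-(\alpha+l)}$. Since every sum in sight is finite --- the ranges of $\alpha$, $l$, and $b$ are finite, and $A_+(d-1)$ is a finite set --- I may freely interchange the order of summation and pull the sum over $b\in\mathbb{F}_q$ to the inside, isolating the character sum $\sum_{b\in\mathbb{F}_q}b^{m}$ with $m=(\beta+k)-(\alpha+l)$.

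The decisive step is the elementary evaluation $\sum_{b\in\mathbb{F}_q}b^{m}=-1$ when $m>0$ and $(q-1)\mid m$, and $\sum_{b\in\mathbb{F}_q}b^{m}=0$ in every other case --- in particular when $m=0$, where the sum equals $q=0$ in characteristic $p$. Because $0\le\alpha\le\beta$ and $0\le l\le k$ force $\alpha+l\le\beta+k$ with equality only at $(\alpha,l)=(\beta,k)$, the inequality $m>0$ is exactly the condition $(\alpha,l)\ne(\beta,k)$, while $(q-1)\mid m$ is the stated divisibility; hence the surviving index pairs are precisely those in $X$, each contributing a factor $-1$. The leftover inner sum is $\sum_{h\in A_+(d-1)}h(t)^\alpha h^l=S_{d-1}(\chi_t^\alpha,l)$, giving for $d\ge 1$ the recursion $S_d(\chi_t^\beta,k)=-\sum_{(\alpha,l)\in X}\binom{\beta}{\alpha}\binom{k}{l}t^\alpha\theta^{\,l}\,S_{d-1}(\chi_t^\alpha,l)$.

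Finally I would multiply by $x^{-d}$ and sum over $d\ge 0$. The degree-zero term is $S_0(\chi_t^\beta,k)=1$, since $A_+(0)=\{1\}$, and accounts for the constant $1$; the tail $d\ge 1$ reindexes by $d\mapsto d-1$, factoring out a single $x^{-1}$ and rebuilding $\sum_{e\ge 0}x^{-e}S_e(\chi_t^\alpha,l)=z(\chi_t^\alpha,x,-l)$, which assembles into the asserted formula \eqref{eqn:zrecurs}. I expect the only genuinely delicate point to be the case analysis for the character sum over $\mathbb{F}_q$: one must treat $m=0$ separately from $m>0$ and verify that the excluded pair $(\beta,k)$, which is exactly the case $m=0$, contributes nothing, so that no spurious term competes with the constant $1$ produced by the degree-zero summand. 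Everything else is routine bookkeeping with finite sums together with a reindexing of a power series in $x^{-1}$.
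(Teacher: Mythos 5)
Your argument is correct and is precisely the elementary calculation the paper sketches: the decomposition $a=\theta h+b$, the double binomial expansion of $\chi_t(a)^\beta a^k$, the interchange of finite sums, and the evaluation of $\sum_{b\in\mathbb{F}_q}b^m$ (equal to $-1$ exactly when $m>0$ and $(q-1)\mid m$), which is what produces the index set $X$ and the overall minus sign. One point you should not gloss over: your computation (correctly) yields the factor $\theta^l$, whereas \eqref{eqn:zrecurs} as printed carries $\theta^k$; the printed exponent is a typo --- for instance with $q=3$, $\beta=0$, $k=2$ one finds $S_1(2)=-1$ rather than $-\theta^2$, and the paper's own later use of the recursion (where the $(0,0)$ term contributes $x^{-2}$ with no power of $\theta$) confirms that $\theta^l$ is intended --- so rather than asserting that your recursion ``assembles into the asserted formula'' verbatim, you should state explicitly that it establishes the corrected identity with $\theta^l$ in place of $\theta^k$.
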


\begin{cor} \label{cor:lrecurs}
Let $X$ be as in Proposition \ref{pr:zrecurs}. Then $L(\chi_t^0, 0) = 1$, and
\begin{eqnarray} \label{eqn:lrecurs}
L(\chi_t^\beta,-k) = 1 - \sum_{(\alpha,l) \in X}{\beta \choose \alpha}{k \choose l}t^\alpha \theta^k  L(\chi_t^\alpha, -l).
\end{eqnarray}
\begin{proof}
Substituting $x = 1$ into \eqref{eqn:zrecurs} gives the result.
\end{proof}
\end{cor}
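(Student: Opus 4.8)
The plan is to obtain the identity for $L(\chi_t^\beta,-k)$ simply by specializing the recursion of Proposition \ref{pr:zrecurs} at $x = 1$. First I would pin down the precise relationship between the special polynomials and the values $L(\chi_t^\beta,-k)$. Reading off the definition of $z$, one has
$$z(\chi_t^\beta, x, -k) = \sum_{e = 0}^\infty x^{-e}\, S_e(\chi_t^\beta,k), \qquad S_e(\chi_t^\beta,k) = \sum_{a \in A_+(e)} \chi_t(a)^\beta a^{k}.$$
By Goss' vanishing result quoted in the introduction, $S_e(\chi_t^\beta,k) = 0$ for all $e \gg 0$, so $z(\chi_t^\beta, x, -k)$ is in fact a \emph{polynomial} in $x^{-1}$ with coefficients in $A[t]$, not merely a formal power series. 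This is the key structural point: because the series terminates, evaluation at $x = 1$ is unproblematic.

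Next I would evaluate at $x = 1$. Setting $x = 1$ collapses the powers of $x^{-1}$ and gives
$$z(\chi_t^\beta, 1, -k) = \sum_{e = 0}^\infty S_e(\chi_t^\beta,k) = \sum_{a \in A_+} \chi_t(a)^\beta a^{k} = L(\chi_t^\beta,-k),$$
the last equality being the definition of the value of Pellarin's series at the negative integer $-k$, with the twist by $\beta$ handled exactly as in the case $\beta = 1$. Thus the operation ``substitute $x = 1$'' carries the special polynomial to the corresponding $L$-value, and likewise carries each factor $z(\chi_t^\alpha, x, -l)$ appearing on the right-hand side of \eqref{eqn:zrecurs} to $L(\chi_t^\alpha,-l)$.

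It then remains only to apply this substitution termwise to \eqref{eqn:zrecurs}. Since the index set $X$, the binomial coefficients ${\beta \choose \alpha}{k \choose l}$, and the monomial $t^\alpha \theta^k$ do not involve $x$, they are untouched, while the prefactor $x^{-1}$ becomes $1$; what results is precisely \eqref{eqn:lrecurs}. The base case is immediate: substituting $x = 1$ into $z(\chi_t^0, x, 0) = 1$ yields $L(\chi_t^0,0) = 1$. I do not anticipate any genuine obstacle here, since the argument is a direct specialization. The only point meriting any care is the justification that $z(\chi_t^\beta,x,-k)$ is a polynomial in $x^{-1}$, so that setting $x = 1$ is legitimate rather than a potentially divergent substitution into a power series — and this is supplied precisely by Goss' vanishing of the power sums $S_e(\chi_t^\beta,k)$ for large $e$.
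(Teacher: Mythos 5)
Your proposal is correct and follows exactly the paper's route: the paper's proof is the one-line substitution of $x=1$ into \eqref{eqn:zrecurs}. The extra care you take in justifying that $z(\chi_t^\beta,x,-k)$ is a genuine polynomial in $x^{-1}$ (via Goss' vanishing of $S_e(\chi_t^\beta,k)$ for $e\gg 0$), so that the evaluation is legitimate, is a sensible elaboration of the same argument rather than a different approach.
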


In \cite{Go-11}, Goss showed that the $L$-values $L(\chi_t^\beta,-k)$ vanish for positive integers $k$ congruent to $-\beta \mod (q-1)$, and due to a link that exists between these $L$-values and classical characteristic zero $L$-values, see \cite{Go96} Section 4.13, he refers to these as \emph{trivial zeros of Pellarin's $L$-series}. As these $L$-values are obtained from the special polynomials $z(\chi_t^\beta,x,-k)$ by evaluation at $x = 1$ we shall say that $L(\chi_t^\beta,-k)$ is a \emph{simple zero of Pellarin's $L$-series} if $z(\chi_t^\beta,x,-k)$ has a simple zero at $x = 1$.

\begin{thm}
Let $\beta$ be a fixed non-negative integer. Then Pellarin's $L$-series $L(\chi_t^\beta, s)$ vanishes for each negative integer $s = -k$ such that $\beta + k \equiv 0 \text{ mod }(q-1).$ If $\beta + k \not\equiv 0 \text{ mod } (q-1)$ then $L(\chi_t^\beta, s)$ does not vanish. Furthermore, each trivial zero of Pellarin's $L$-series is simple.
\begin{proof} 
The first pair of non-negative integers $(\beta,k)$ for which the sum in \eqref{eqn:lrecurs} is non-empty is when $\beta + k = q-1$. In this case, the first non-zero term in the series occurs when $(\alpha,l) = (0,0)$, and this corresponds to $L(\chi_t^0,0) = 1$. Hence $L(\chi_t^\beta, -k) = 0$. 

Let $k$ and $\beta$ be non-negative integers, and suppose that $\beta + k \equiv 0 \text{ mod } (q-1)$. Then the only terms which occur in the recursive formula for $L(\chi_t^\beta, -k)$ correspond to pairs $(\alpha, l)$ such that either $\alpha < \beta$ or $l < k$, and $\alpha + l \equiv 0 \text{ mod }(q-1)$. In particular, the summand corresponding to the pair (0,0) appears and this eliminates the $1$ on the right side of \eqref{eqn:lrecurs}. By induction on the pairs $(\alpha,l) \neq (0,0)$ we see that each $L(\chi_t^\alpha, -l)$ vanishes. As these are precisely the terms remaining in \eqref{eqn:lrecurs}, we conclude that $L(\chi_t^\beta, -k)$ vanishes.

Conversely, if $\beta$ and $k$ are such that $\beta + k \not\equiv 0 \text{ mod }(q-1)$, then the summand corresponding to the pair $(0,0)$ does not appear. Thus $L(\chi_t^\beta,s)$ has constant coefficient equal to $1$ and does not vanish. 

To see that the above zeros are simple we examine the derivative of $z(\chi_t^\beta,x,-k)$ with respect to $x$. We have
\begin{eqnarray} \label{eqnarray:simpzero}
&& \frac{\partial}{\partial x}(z(\chi_t^\beta,x,-k)) = \\
&=& x^{-2}\sum_{(\alpha,l) \in X} {\beta \choose \alpha}{k \choose l}t^{\alpha}\theta^{k}z(\chi_t^\alpha,x,-l) - \\
&&-x^{-1}\sum_{(\alpha,l) \in X} {\beta \choose \alpha}{k \choose l}t^{\alpha}\theta^{k}\frac{\partial}{\partial x}(z(\chi_t^\alpha,x,-l)).
\end{eqnarray}
Now suppose $\beta+k = (q-1)$. Then the only term appearing in either sum corresponds to the pair $(0,0)$. In the first sum this is just $x^{-2}$. In the second sum we have $(\partial / \partial x)(z(\chi_t^0,x,0)) = (\partial / \partial x)(1) = 0$. Hence, substituting $x = 1$ shows that the derivative of the special polynomial does not vanish. If $\beta + k \equiv 0 \text{ mod }(q-1)$ the term corresponding to the pair $(0,0)$ appears again giving rise to an $x^{-2}$. The only other terms which may appear are those corresponding to pairs $(\alpha, l)$, where either $\alpha > 0$ or $l > 0$ and $\alpha + l \equiv 0 \mod (q-1)$. Suppose for illustration that $\alpha > 0$. We are looking at
$$x^{-2}{\beta \choose \alpha}{k \choose l}t^{\alpha}\theta^{k}z(\chi_t^\alpha,x,-l) -x^{-1}{\beta \choose \alpha}{k \choose l}t^{\alpha}\theta^{k}\frac{\partial}{\partial x}(z(\chi_t^\alpha,x,-l)).$$
By induction, $(\partial / \partial x)(z(\chi_t^\alpha,x,-l))$ does not vanish upon evaluation at $x = 1$, and thus the second term above is divisible by $t^\alpha$ (hence by $t$). The first term above vanishes when evaluated at $x = 1$ as has already been shown.

Thus in general we have, $(\partial / \partial x)(z(\chi_t^\beta,x,-k)) = x^{-2} + t f(\theta,t,x^{-1}) + \theta g(\theta, t, x^{-1}),$ 
for two polynomials $f, g \in \mathbb{F}_q[\theta, t, x^{-1}]$ both of which may be zero (as in the case $\beta + k = q-1$). Thus $(\partial / \partial x) (z(\chi_t^\beta,x,-k))|_{x = 1}$ does not vanish. 
\end{proof}
\end{thm}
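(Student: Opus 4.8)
The plan is to induct on the weight $\beta+k$, using the recursion \eqref{eqn:lrecurs} for the two non-vanishing claims and its $x$-derivative \eqref{eqnarray:simpzero} for the simplicity claim. Everything is driven by one structural feature of the index set $X$: by definition each $(\alpha,l)\in X$ satisfies $\alpha+l\equiv\beta+k\pmod{q-1}$, and since $\alpha\le\beta$, $l\le k$ with $(\alpha,l)\ne(\beta,k)$ we in fact have $\alpha+l<\beta+k$. Consequently the pair $(0,0)$ lies in $X$ exactly when $\beta+k\equiv0\pmod{q-1}$ (and $(\beta,k)\ne(0,0)$), and when present its contribution to \eqref{eqn:lrecurs} is the $(0,0)$-term $\binom{\beta}{0}\binom{k}{0}L(\chi_t^0,0)=1$, which precisely cancels the leading $1$.

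Suppose first $\beta+k\equiv0\pmod{q-1}$. In the base case $\beta+k=q-1$ one checks $X=\{(0,0)\}$, so the $(0,0)$-term cancels the $1$ and $L(\chi_t^\beta,-k)=0$. For the inductive step the $(0,0)$-term again removes the $1$; every remaining $(\alpha,l)\in X$ is distinct from $(0,0)$, has strictly smaller weight, and satisfies $\alpha+l\equiv0\pmod{q-1}$, so by the inductive hypothesis $L(\chi_t^\alpha,-l)=0$. Hence the entire right-hand side of \eqref{eqn:lrecurs} vanishes. Now suppose instead $\beta+k\not\equiv0\pmod{q-1}$. Then $(0,0)\notin X$, so every summand of \eqref{eqn:lrecurs} carries a factor $t^\alpha$ with $\alpha>0$ or a positive power of $\theta$, and therefore lies in the ideal $(t,\theta)$ of $\mathbb{F}_q[\theta,t]$. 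Since $L(\chi_t^\beta,-k)$ is a polynomial in $\mathbb{F}_q[\theta,t]$ (being $z(\chi_t^\beta,1,-k)$, with $z$ a polynomial in $x^{-1}$ over $\mathbb{F}_q[\theta,t]$ by Goss together with the recursion), we conclude $L(\chi_t^\beta,-k)\equiv1\pmod{(t,\theta)}$; its constant term is $1$, so it is nonzero.

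For simplicity I would differentiate \eqref{eqn:zrecurs} to obtain \eqref{eqnarray:simpzero} and evaluate at $x=1$. In the first sum, the $(0,0)$-term contributes $x^{-2}z(\chi_t^0,x,0)$, which equals $1$ at $x=1$, while for every other $(\alpha,l)\in X$ one has $\alpha+l\equiv0\pmod{q-1}$, so $z(\chi_t^\alpha,1,-l)=L(\chi_t^\alpha,-l)=0$ by the vanishing just proved; thus the first sum equals $1$ at $x=1$. In the second sum the $(0,0)$-term contributes nothing because $\tfrac{\partial}{\partial x}z(\chi_t^0,x,0)=\tfrac{\partial}{\partial x}(1)=0$, and every other term carries a factor $t^\alpha$ or a positive power of $\theta$ and so lies in $(t,\theta)$. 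Hence $\tfrac{\partial}{\partial x}z(\chi_t^\beta,x,-k)\big|_{x=1}\equiv1\pmod{(t,\theta)}$ is nonzero, and since differentiating in $x$ rather than in $x^{-1}$ only introduces the factor $-x^{-2}$, which is a unit at $x=1$, this is exactly the assertion that $x=1$ is a simple root.

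I expect the simplicity step to be the main obstacle, the first two claims being a clean induction off the appearance of the $(0,0)$-term. The derivative \eqref{eqnarray:simpzero} is an unwieldy double sum, and the crux is to see that at $x=1$ it collapses: one feeds in the vanishing result for trivial zeros to annihilate all but the $(0,0)$-term of the first sum, uses $\tfrac{\partial}{\partial x}(1)=0$ to discard the $(0,0)$-term of the second sum, and tracks divisibility by $t$ and $\theta$ to confine the rest of the second sum to the ideal $(t,\theta)$, so that a nonzero constant term $1$ survives. Making this precise is cleanest if one keeps in mind throughout that $z(\chi_t^\beta,x,-k)$ is a genuine polynomial in $x^{-1}$ with coefficients in $\mathbb{F}_q[\theta,t]$, so that ``constant term'' and reduction modulo $(t,\theta)$ are legitimate.
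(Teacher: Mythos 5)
Your proposal is correct and follows essentially the same route as the paper: induction through the recursion \eqref{eqn:lrecurs} driven by whether the $(0,0)$-term appears, and differentiation of \eqref{eqn:zrecurs} at $x=1$ with the surviving constant term $1$ modulo $(t,\theta)$ for simplicity. Your handling of the second sum in the simplicity step (using only the explicit $t^\alpha\theta^l$ factors rather than an inductive non-vanishing of the derivatives) is a minor streamlining of the paper's argument, not a different method.
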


\section{Logarithmic Growth}
Throughout this section let $t$ be an indeterminate. We recall the definition of the special polynomials associated to Pellarin's $L$-series. For $\beta,k$ any non-negative integers we defined
$$z(\chi_t^\beta, x, -k) := \sum_{d = 0}^\infty x^{-d} \left( \sum_{a \in A_+(d)} \chi_t(a)^\beta a^{k} \right),$$
and we denoted the coefficient of $x^{-d}$ in $z(\chi_t,x,-k)$ by
$$S_d(\chi_t^\beta,k) := \sum_{a \in A_+(d)} \chi_t(a)^\beta a^k.$$ 

For positive integers $\alpha = a_d q^d + a_{d-1}q^{d-1} + \cdots + a_1 q + a_0$ written in base $q$ with $a_d \neq 0$ define $\deg_q(\alpha) = d$, and define the \emph{$q$-length} of $\alpha$ to be $l(\alpha) := \sum_{i = 0}^d a_d.$

Finally, for a pair of positive integers $(\beta,k)$ define
$$\phi(\beta,k):= \min_{0\leq i <\log_p(q)}\left\lfloor \frac{l(p^i\beta)+l(p^i k)}{q-1} \right\rfloor.$$

\subsection{Exact Degree for $z(\chi_t^\beta,x,-k)$}

In this section we calculate the exact degree in $x^{-1}$ of the special polynomials associated to Pellarin's $L$-series by means of a specialization argument suggested to us by B\"ockle.

\begin{thm}\label{exactdegree}
Let $\beta,k$ be positive integers. Then the exact degree in $x^{-1}$ of the polynomial $z(\chi_t^\beta,x,-k)$ is $\phi(\beta,k)$.
\begin{proof}
We view $S_d(\chi_t^\beta,k)$ as a polynomial in $t$ over $A$. We will show that $S_d(\chi_t^\beta,k)$ vanishes for $t = \theta^{q^m}$ for all $m$ sufficiently large, and hence vanishes identically in $A[t]$.

Let $M = \max_{0\leq i <\log_p(q)}\{\deg_q(p_i k)\}$. Let $m > M$. Then for $0 \leq i < \log_q(p)$ there is no carry-over of digits in base $q$ in the sum $q^{m}p^i\beta + p^i k$. Hence 
$$l(q^{m}p^i\beta + p^i k) = l(q^{m}p^i\beta) + l(p^i k) = l(p^i\beta)+l(p^i k).$$
Thus
$$\phi(\beta,k) = \min_{0 \leq i <\log_p(q)}\left\lfloor \frac{l(q^{m}p^i\beta + p^i k)}{q-1} \right\rfloor.$$
Then, by B\"ockle's theorem \cite{Bo-12}, Theorem 1.2 (a), for $d > \phi(\beta,k)$ and all $m > M$ we have
$$S_d(\chi_t^\beta,k)|_{t = \theta^{q^m}} = S_d(q^{m}\beta + k) = 0.$$
Hence $S_d(\chi_t^\beta,k)$ is identically zero whenever $d > \phi(\beta,k)$.

Similarly, for $d = \phi(\beta,k)$ and any $m > M$ we have
$$S_d(\chi_t^\beta,k)|_{t = \theta^{q^m}} = S_d(q^{m}\beta + k) \neq 0.$$
We conclude that $S_d(\chi_t^\beta,k) \neq 0$ for $d = \phi(\beta,k)$.

It follows that the degree in $x^{-1}$ of $z(\chi_t^\beta,k)$ is exactly $\phi(\beta,k)$ as claimed. 
\end{proof}
\begin{rem}
Should it become necessary to examine the power sums 
$$S_d(\beta_1,\dots,\beta_r,k) := \sum_{a \in A_+(d)}\chi_{t_1}(a)^{\beta_1}\cdots\chi_{t_r}(a)^{\beta_r}a^k \in A[t_1,\dots,t_r]$$ and the special polynomials associated to them
$$z_d(\beta_1,\dots,\beta_r,x,k) = \sum_{d = 0}^\infty x^{-d}S_d(\beta_1,\dots,\beta_r,k)$$
the argument above extends inductively to show that these are indeed polynomials and to determine the exact degree in $x^{-1}$ of these multi-variate special polynomials. We leave the details to the reader.
\end{rem}
\end{thm}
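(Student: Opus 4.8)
The plan is to reduce the multivariate problem to B\"ockle's already-established formula for the ordinary power sums by a specialization in the variable $t$. I would begin by regarding each coefficient $S_d(\chi_t^\beta,k) = \sum_{a \in A_+(d)} \chi_t(a)^\beta a^k$ as a polynomial in $t$ with coefficients in $A$, and then evaluate at the special values $t = \theta^{q^m}$ for non-negative integers $m$. The crucial observation is that for a polynomial $a = \sum_i c_i \theta^i \in A$ with $c_i \in \mathbb{F}_q$, the $q^m$-power Frobenius fixes each $c_i$, so that $\chi_{\theta^{q^m}}(a) = \sum_i c_i \theta^{i q^m} = a^{q^m}$. Consequently the specialization collapses the character into an ordinary power:
$$S_d(\chi_t^\beta,k)\big|_{t = \theta^{q^m}} = \sum_{a \in A_+(d)} a^{q^m\beta} a^k = S_d(q^m\beta + k).$$

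Next I would verify that for all sufficiently large $m$ the exponent $q^m\beta + k$ has the $q$-length needed to match $\phi(\beta,k)$. Taking $m$ greater than $\max_{0 \le i < \log_p(q)} \deg_q(p^i k)$ ensures that the base-$q$ digits of $q^m p^i\beta$ and of $p^i k$ occupy disjoint positions, so no carries occur in the sum $p^i(q^m\beta+k) = q^m p^i\beta + p^i k$. Since multiplication by $q^m$ merely shifts base-$q$ digits, this yields $l\big(p^i(q^m\beta+k)\big) = l(p^i\beta) + l(p^i k)$ for each relevant $i$, and therefore B\"ockle's Corollary gives, for every such $m$, that the degree in $x^{-1}$ of $z(x, -(q^m\beta+k))$ equals $\phi(\beta,k)$.

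With these two facts in hand the theorem follows by bounding the degree from both sides. For the upper bound, if $d > \phi(\beta,k)$ then B\"ockle's formula forces $S_d(q^m\beta+k) = 0$ for all large $m$; hence the polynomial $S_d(\chi_t^\beta,k) \in A[t]$ vanishes at the infinitely many distinct points $\theta^{q^m}$ and must be identically zero, so $z(\chi_t^\beta,x,-k)$ has degree in $x^{-1}$ at most $\phi(\beta,k)$. For the lower bound, at $d = \phi(\beta,k)$ the exact-degree part of B\"ockle's statement guarantees $S_d(q^m\beta+k) \neq 0$, so the specialization of $S_d(\chi_t^\beta,k)$ is nonzero and the polynomial cannot vanish identically; thus the degree is at least $\phi(\beta,k)$.

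The step I expect to demand the most care is the carry-free digit analysis: one must confirm that the single threshold $m > \max_i \deg_q(p^i k)$ works simultaneously for all $i$ in the range $0 \le i < \log_p(q)$, so that the minimum defining $\phi(\beta,k)$ is genuinely preserved under specialization. Everything else is routine, namely the Frobenius identity for $\chi_{\theta^{q^m}}$ and the passage from vanishing at infinitely many $\theta^{q^m}$ to vanishing in $A[t]$, the latter because $A[t]$ is a domain and a nonzero polynomial in one variable has only finitely many roots.
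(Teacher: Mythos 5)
Your proposal is correct and follows essentially the same route as the paper: specialize at $t = \theta^{q^m}$ so that $\chi_{\theta^{q^m}}(a) = a^{q^m}$ turns $S_d(\chi_t^\beta,k)$ into $S_d(q^m\beta+k)$, check that for $m$ large the base-$q$ digits of $q^m p^i\beta$ and $p^i k$ do not interact so that $\phi(\beta,k)$ is preserved, and then invoke B\"ockle's exact-degree result in both directions. Your write-up even makes explicit the Frobenius identity and the ``vanishes at infinitely many points, hence identically'' step that the paper leaves implicit.
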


\section{Carlitz Approximations}
\subsection{Development and Logarithmic Growth}
In this section we develop the theory of Carlitz polynomial approximations. As a first application we will prove the theorem stated just below. In light of the previous section, one also wants some other application of these Carlitz polynomials. In the following section we connect them to the value of Pellarin's series $L(\chi_t,1)$.

\begin{thm}\label{thm:loggrowth}
Let $\beta$ be a fixed non-negative integer. Suppose $l(\beta) < q$. Then:
\begin{enumerate}
\item The polynomials $S_d(\chi_t^\beta, k)$ vanish when $k < q^d - q^{d-1}l(\beta) - 1$.

\item Let $k$ be a fixed positive integer. Then the degree of $z(\chi_t^\beta,x,k)$ in $x^{-1}$ is at most the greatest integer less than $\log_q((k+1)/(q - l(\beta))) + 1$.
\end{enumerate} 
\end{thm}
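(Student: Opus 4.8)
The plan is to prove part (1) by interpolating the $\mathbb{F}_q$-linear function $\chi_t$ and its Frobenius twists on the finite set $A_+(d)$ by \emph{low-degree} polynomials, and then to reduce the vanishing of $S_d(\chi_t^\beta,k)$ to the classical vanishing of the ordinary power sums $S_d(e)=\sum_{a\in A_+(d)}a^e$ for $0\le e<q^d-1$ (which follows from Sheats' theorem, the minimal admissible exponent being $q^d-1=\sum_{j=0}^{d-1}q^j(q-1)$). Part (2) will then be a direct rearrangement of the inequality in part (1) into a bound on $d$.

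First I would record two facts from the Carlitz approximation theory developed above. Writing $e_i(X)=\prod_{\deg b<i}(X-b)$ for the Carlitz $\mathbb{F}_q$-linear polynomial of degree $q^i$, the $e_i$ with $0\le i\le d$ are linearly independent functions on the $(d+1)$-dimensional space $A_{\le d}$ of polynomials of degree $\le d$ (a nonzero combination would be a polynomial of degree $\le q^d$ vanishing at the $q^{d+1}$ points of $A_{\le d}$), hence form a basis of the space of $\mathbb{F}_q$-linear functions on $A_{\le d}$; so every such function agrees on $A_{\le d}$ with a combination $\sum_{i=0}^d\gamma_i e_i$. Second, since $e_d$ is additive and vanishes on $A_{\le d-1}$, for every $a=\theta^d+a'\in A_+(d)$ (with $a'\in A_{\le d-1}$) one has $e_d(a)=e_d(\theta^d)$, a constant independent of $a$. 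Consequently, on $A_+(d)$ the $e_d$-term contributes only a constant, and each $\mathbb{F}_q$-linear function $f$ agrees with a polynomial $\gamma_d e_d(\theta^d)+\sum_{i=0}^{d-1}\gamma_i e_i(a)$ of degree at most $q^{d-1}$ in $a$.

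Next I would exploit the hypothesis $l(\beta)<q$. Writing $\beta=\sum_j a_j q^j$ in base $q$, so $\sum_j a_j=l(\beta)$, the identity $\chi_t(a)^{q^j}=\chi_{t^{q^j}}(a)$ gives $\chi_t(a)^\beta=\prod_j\chi_{t^{q^j}}(a)^{a_j}$, a product of $l(\beta)$ (with multiplicity) $\mathbb{F}_q$-linear functions of $a$. Applying the previous paragraph to each factor $\chi_{t^{q^j}}$, on $A_+(d)$ each is a constant plus a polynomial of degree $\le q^{d-1}$, so their product agrees on $A_+(d)$ with a single polynomial $\widetilde N(a)$ of degree at most $l(\beta)\,q^{d-1}$; here $l(\beta)<q$ guarantees $l(\beta)q^{d-1}<q^d$. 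Therefore
$$S_d(\chi_t^\beta,k)=\sum_{a\in A_+(d)}\widetilde N(a)\,a^k=\sum_{j\le l(\beta)q^{d-1}}\widetilde N_j\,S_d(j+k),$$
with $\widetilde N_j$ the coefficients of $\widetilde N$. Since $S_d(e)=0$ for $0\le e<q^d-1$, every summand vanishes as soon as $l(\beta)q^{d-1}+k<q^d-1$, that is, when $k<q^d-q^{d-1}l(\beta)-1$; this is part (1). (Equivalently, this computes the order of vanishing at $z=\infty$ of the generating series $\sum_{a\in A_+(d)}\chi_t(a)^\beta/(z-a)=\sum_{k\ge0}S_d(\chi_t^\beta,k)z^{-(k+1)}$.) For part (2), $S_d(\chi_t^\beta,k)\neq0$ forces $k\ge q^d-q^{d-1}l(\beta)-1$, i.e. $q^{d-1}\le (k+1)/(q-l(\beta))$ using $l(\beta)<q$, which rearranges to $d\le \log_q((k+1)/(q-l(\beta)))+1$; the degree in $x^{-1}$, being the largest $d$ with $S_d(\chi_t^\beta,k)\neq0$, is thus bounded as stated, and is logarithmic in $k$ for fixed $\beta$.

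I expect the main obstacle to be the second paragraph: setting up the Carlitz interpolation cleanly and, in particular, isolating the observation that $e_d$ is constant on $A_+(d)$, so that interpolating on $A_+(d)$ rather than on all of $A_{\le d}$ drops the degree from $q^d$ to $q^{d-1}$. This degree drop, together with the multiplicativity bound $\deg\widetilde N\le l(\beta)q^{d-1}$ coming from $l(\beta)<q$, is exactly what upgrades the crude linear vanishing range (obtainable from the Chevalley--Warning-type estimate $\deg<d(q-1)$) to the sharp exponential range $q^d-q^{d-1}l(\beta)-1$; keeping careful track that the exponents $j+k$ arising stay below $q^d-1$ is the crux.
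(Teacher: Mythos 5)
Your proof is correct and follows essentially the same route as the paper: both arguments hinge on the digit principle, interpolating $\chi_t^\beta$ on $A_+(d)$ by a polynomial of degree at most $l(\beta)q^{d-1}$ (the degree drop from $q^d$ to $q^{d-1}$ for each $\mathbb{F}_q$-linear factor being the key point), and then exploiting the gap against the degree $q^d$ of $\prod_{a\in A_+(d)}(z-a)$. The only differences are presentational: the paper builds the interpolant via the operator $M_d$ and reads off the vanishing from the order of $1/z$ in $M_d(\chi_t^\beta)(z)/\prod_{a\in A_+(d)}(z-a)$, whereas you build it from the $e_i$-basis (using that $e_d$ is constant on $A_+(d)$) and reduce to the classical vanishing $S_d(e)=0$ for $0\le e<q^d-1$ --- two equivalent formulations of the same computation.
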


We begin with a definition.

\begin{defn}
For a ring $L$ of \emph{characteristic} $p$ we define the \emph{affine polynomial ring associated to} $L$ as a set by
$$\mathcal{A}(L) := \{l_{-1} + \sum_{i = 0}^n l_i z^{q^i} : n \geq 0 \text{ and } l_i \in L \text{ for all } i = -1,0,1,...\} \subseteq L[z].$$
The set $\mathcal{A}(L)$ forms a ring with the operations of polynomial composition and addition. Polynomials $f \in \mathcal{A}(L)$ will be called \emph{affine}. 

The subring of $\mathcal{A}(L)$ consisting of those polynomials for which $l_{-1} = 0$ will be called \emph{the ring of $\mathbb{F}_q$-linear polynomials}. We will refer to elements in this subring as \emph{$\mathbb{F}_q$-linear}.
\end{defn} 

For $d \geq 1$, let $A(d)$ be the $\mathbb{F}_q$-vector space of polynomials in $A$ of degree strictly less than $d$. Let $A(0) = \{0\}$. For $d \geq 1$, let $D_d$ be the product of all monic polynomials of degree $d$, and $L_d$ be the least common multiple of all polynomials of degree $d$. Let $D_0 = L_0 = 1$. For $d \geq 0$, Carlitz introduced and studied the polynomials
$$e_d(z) := \prod_{a \in A(d)}(z - a).$$
We will need some of their basic properties to follow.
\begin{thm}[Carlitz] \label{carlitzcoeff}
For $d \geq 0$, the polynomial $e_d(z)$ is $\mathbb{F}_q$-linear, and the coefficient of $z$ is equal to $(-1)^d D_d/ L_d$.
\begin{proof}
See Goss \cite{Go96}, Chapter 3, Section 1.
\end{proof}
\end{thm}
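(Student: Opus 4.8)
The plan is to induct on $d$, extracting both assertions from a single recursion relating $e_d$ to $e_{d-1}$. The base case $d = 0$ is immediate: since $A(0) = \{0\}$ we have $e_0(z) = z$, which is $\mathbb{F}_q$-linear with coefficient of $z$ equal to $1 = (-1)^0 D_0/L_0$.

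For the inductive step I would exploit the coset decomposition $A(d) = \bigsqcup_{c \in \mathbb{F}_q}(A(d-1) + c\theta^{d-1})$, which holds because $A(d-1)$ together with the line $\mathbb{F}_q\theta^{d-1}$ spans $A(d)$. Grouping the factors of $e_d$ by coset and using the inductive hypothesis that $e_{d-1}$ is $\mathbb{F}_q$-linear gives $e_d(z) = \prod_{c \in \mathbb{F}_q}\bigl(e_{d-1}(z) - c\,e_{d-1}(\theta^{d-1})\bigr)$. Applying the identity $\prod_{c \in \mathbb{F}_q}(X - cY) = X^q - Y^{q-1}X$ (a consequence of $\prod_{c \in \mathbb{F}_q}(T-c) = T^q - T$) then yields the clean recursion
$$e_d(z) = e_{d-1}(z)^q - e_{d-1}(\theta^{d-1})^{q-1}\,e_{d-1}(z).$$
Since $z \mapsto e_{d-1}(z)^q$ is $\mathbb{F}_q$-linear (the $q$-th power map is additive in characteristic $p$ and fixes $\mathbb{F}_q$) and the second term is a scalar multiple of $e_{d-1}$, the right-hand side is $\mathbb{F}_q$-linear, which establishes the first assertion.

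The key computational input is the evaluation $e_{d-1}(\theta^{d-1}) = D_{d-1}$: as $a$ runs over $A(d-1)$, the polynomial $\theta^{d-1} - a$ runs bijectively over all monic polynomials of degree $d-1$, so their product is $D_{d-1}$ by definition. Feeding this into the recursion and reading off the coefficient of $z$ — noting that $e_{d-1}(z)^q$ contributes no linear term — gives $c_0(d) = -D_{d-1}^{q-1}c_0(d-1)$, where $c_0(d)$ denotes the coefficient of $z$ in $e_d$. Unwinding this from $c_0(0) = 1$ produces $c_0(d) = (-1)^d\bigl(\prod_{i=0}^{d-1}D_i\bigr)^{q-1}$.

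It then remains to identify $\bigl(\prod_{i=0}^{d-1}D_i\bigr)^{q-1}$ with $D_d/L_d$, which I would verify by a second induction using the standard Carlitz recursions $D_d = [d]D_{d-1}^q$ and $L_d = [d]L_{d-1}$ for $[d] := \theta^{q^d}-\theta$: the hypothesis $\bigl(\prod_{i=0}^{d-2}D_i\bigr)^{q-1} = D_{d-1}/L_{d-1}$ multiplies through to give $\bigl(\prod_{i=0}^{d-1}D_i\bigr)^{q-1} = D_{d-1}^q/L_{d-1} = D_d/L_d$. The main thing to get right is the bookkeeping of the sign and exponents — in particular confirming that the factor $-D_{d-1}^{q-1}$ carries the correct sign so that the $(-1)^d$ emerges cleanly from the recursion, rather than through the more delicate sign counting one meets in evaluating the coefficient of $z$ directly as $(-1)^{q^d-1}\prod_{0 \neq a \in A(d)} a$. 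Everything else is routine substitution.
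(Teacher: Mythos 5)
Your proof is correct, and since the paper itself offers no argument here (it simply cites Goss, Chapter 3, Section 1), your induction via the coset decomposition $A(d)=\bigsqcup_{c\in\mathbb{F}_q}(A(d-1)+c\theta^{d-1})$ and the resulting recursion $e_d(z)=e_{d-1}(z)^q-D_{d-1}^{q-1}e_{d-1}(z)$ is essentially the standard argument the citation points to. The bookkeeping checks out: $c_0(d)=(-1)^d\bigl(\prod_{i=0}^{d-1}D_i\bigr)^{q-1}$ and the identification with $(-1)^dD_d/L_d$ via $D_d=[d]D_{d-1}^q$, $L_d=[d]L_{d-1}$ are both right.
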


We fix an $A$-algebra $B$ which is an integral domain. Next we will define and discuss a family of $B$-linear operators (first studied by Carlitz \cite{Ca-40}) on the space of functions from $A$ to $B$.

\begin{defn}
Let $f$ be a function from $A$ to $B$, and let $d \geq 0$. The \emph{Carlitz approximation} to $f$ on $A_+(d)$ is the function
$$M_d(f)(z) := \sum_{b \in A_{+}(d)}\left(f(b)\prod_{a \in A_{+}(d) \setminus \{b\}}(z - a)\right).$$ 
\end{defn}
\begin{rem}
The quantity $M_d(1)(z)$ is the derivative with respect to $z$ of 
$$e_d(z - \theta^d) = \prod_{a \in A_+(d)}(z-a).$$ 
By \ref{carlitzcoeff} this is equal to $(-1)^d D_d / L_d$. For now we will denote this constant by $M_d(1)$.
\end{rem}

The following lemma from basic algebra will appear often, and we state it now so that it may be quoted more briefly in all that follows.
\begin{lem}\label{degreelemma}
Any polynomial of degree $d$ with coefficients in an integral domain $R$ has at most $d$ distinct roots in $R$.
\end{lem}

Observe that the $M_d(f)(z) \in B[z]$ are simply interpolation polynomials similar to those of Newton or Lagrange. We have the following theorem due originally to Carlitz.
\begin{thm}
Let $f$ be a function from $A$ to $B$. The polynomial $M_d(f)(z)$ is the unique polynomial in $B[z]$ of degree strictly less than $q^d$ that agrees with $M_d(1)f$ on $A_+(d)$.
\begin{proof}
We may rewrite $M_d(f)(z)$ more compactly as
$$M_d(f)(z) = \sum_{a \in A_+(d)}f(a)\frac{e_d(z - a)}{z - a}.$$
The polynomial $e_d(z)$ is $\mathbb{F}_q$-linear, and hence its formal derivative with respect to $z$ is just a constant in $A$, which we called $M_d(1)$ above. Let $b \in A_+(d)$. Evaluation of the quotient $e_d(z - b)/(z-b)$ at $z = b$ equals the formal derivative of $e_d(z - b)$ at $z = b$, i.e. the constant $M_d(1)$. Hence
$$M_d(f)(b) = \sum_{a \in A_+(d)}f(a)\left.\frac{e_d(z - a)}{z - a}\right|_{z = b} = M_d(1)f(b).$$

Uniqueness follows by Lemma \ref{degreelemma}.
\end{proof}
\end{thm}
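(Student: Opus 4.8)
The plan is to exploit the $\mathbb{F}_q$-linearity of the Carlitz polynomials $e_d$ (Theorem \ref{carlitzcoeff}) together with the translation structure of the monic polynomials $A_+(d)$, and then to deduce uniqueness from the degree bound via Lemma \ref{degreelemma}.

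First I would record the key combinatorial identity $A_+(d) = a + A(d)$ for any fixed $a \in A_+(d)$: every monic polynomial of degree $d$ differs from $a$ by a polynomial of degree strictly less than $d$. Consequently, for each $a \in A_+(d)$,
$$e_d(z - a) = \prod_{c \in A(d)}\big((z-a) - c\big) = \prod_{a' \in A_+(d)}(z - a'),$$
so this product is in fact independent of the chosen $a$. Dividing by $(z - a)$, which cancels exactly the factor indexed by $a' = a$, shows that $e_d(z-a)/(z-a) = \prod_{a' \in A_+(d)\setminus\{a\}}(z - a')$. This yields the compact rewriting
$$M_d(f)(z) = \sum_{a \in A_+(d)} f(a)\,\frac{e_d(z-a)}{z-a},$$
and since $|A_+(d)| = q^d$, each summand is a polynomial of degree $q^d - 1$, whence $M_d(f)(z)$ has degree strictly less than $q^d$.

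Next I would verify the interpolation property by evaluating at a fixed $b \in A_+(d)$. For $a \neq b$ the product $\prod_{a' \neq a}(z-a')$ contains the factor $(z - b)$, which vanishes at $z = b$, so every off-diagonal term dies and only the term $a = b$ survives. For that term the quotient $e_d(z-b)/(z-b)$ evaluated at $z = b$ equals the formal derivative of $e_d$ at $0$; because $e_d$ is $\mathbb{F}_q$-linear its derivative is the constant coefficient of $z$, namely $M_d(1) = (-1)^d D_d/L_d$. Hence $M_d(f)(b) = M_d(1) f(b)$, so $M_d(f)$ agrees with $M_d(1)f$ on all of $A_+(d)$.

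Finally, uniqueness follows from Lemma \ref{degreelemma}. If two polynomials in $B[z]$ of degree strictly less than $q^d$ both agree with $M_d(1)f$ on $A_+(d)$, their difference has degree at most $q^d - 1$ yet vanishes at the $q^d$ distinct points of $A_+(d)$; since $B$ is an integral domain, such a polynomial must be identically zero. The main point to get right is the translation identity $A_+(d) = a + A(d)$ together with the fact that the $\mathbb{F}_q$-linearity of $e_d$ makes the derivative constant $M_d(1)$ independent of $a$; once these are in hand, the degree count and Lemma \ref{degreelemma} close the argument cleanly.
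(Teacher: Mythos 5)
Your proof is correct and follows essentially the same route as the paper's: rewrite $M_d(f)(z)$ in terms of $e_d(z-a)/(z-a)$, use the $\mathbb{F}_q$-linearity of $e_d$ to identify the surviving diagonal term at $z=b$ with the constant $M_d(1)$, and get uniqueness from the degree bound and Lemma \ref{degreelemma}. The only difference is that you spell out the translation identity $A_+(d) = a + A(d)$ justifying the compact rewriting, which the paper takes for granted.
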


\begin{rem}
As motivation for the rest of this section, we describe how Carlitz approximation polynomials were rediscovered by the author. Carlitz originally observes \cite{Ca-35} that the logarithmic derivative of 
$$e_d(z - \theta^d) := \prod_{a \in A_+(d)}(z-a)$$ 
gives a generating series for the numbers $S_d(k)$ as $k$ varies over the non-negative integers. Explicitly,
\begin{eqnarray} \label{eqnarray:carlitz}
\frac{\frac{\partial}{\partial z}(e_d(z - \theta^d))}{e_d(z - \theta^d)} = \sum_{e = 0}^\infty \frac{S_d(e)}{z^{e+1}}.
\end{eqnarray}
By Theorem \ref{carlitzcoeff}, 
$$\frac{\partial}{\partial z}(e_d(z - \theta^d)) = (-1)^d\frac{D_d}{L_d}.$$
Combining this with the easy observation that the degree in $z$ of $e_d(z)$ is $q^d$, we see that the order with which $z^{-1}$ divides this logarithmic derivative is $q^d$, and the logarithmic growth follows. 

In an interesting twist, replacing the derivative of $e_d(z - \theta^d)$ in \eqref{eqnarray:carlitz} by the $d$-th Carlitz approximation of $\chi_t(z)$ on $A_+(d)$, namely
$$M_d(\chi_t)(z):= \sum_{a \in A_+(d)}\chi_t(a)\frac{e_d(z - a)}{z-a},$$ 
yields a generating series for the $S_d(\chi_t, k)$ as $k$ varies over the positive integers. We will show that $M_d(\chi_t)(z)$ lies in the ring $Aff(A[t])$. This is enough to obtain the logarithmic growth in $z^{-1}$ of the degrees of the special polynomials in the case $\beta = 1$. For the more general case, we refer the reader to the proof below. 

One can consider the logarithmic derivative of the following formal symbol in order to obtain a generating series for the $S_e(\chi_t^\beta, k)$ as $k$ varies over the positive integers. No effort has been made to understand or even define the following object. We introduce the formal symbol, the ``$\chi_t^\beta$-th \emph{exponential}'' for $A_+(d)$,
$$E_d(\chi_t^\beta)(z) := \prod_{a \in A_+(d)}(z-a)^{\chi_t(a)^\beta}.$$
Formally the logarithmic derivative  $d\log/dz$ of $E_d(\chi_t^\beta)(z)$ with respect to $z$ is then
$$\frac{d\log}{dz}(E_d(\chi_t^\beta)(z)) = \sum_{a \in A_+(d)} \frac{\chi_t(a)^\beta}{(z-a)}.$$
After finding common denominators, the numerator of our result is $M_d(\chi_t^\beta)$. For more on Carlitz' polynomial approximations  and their limits, see \cite{Ca-40}, \cite{Go-89}, \cite{Wa-71}. 
\end{rem}

We believe the next observation is new to the theory. It allows us to use $M_d(f)$ to interpolate the values of $f$ on $A(d)$ for $f$ in the space $\text{Hom}_+(A, B)$ of additive group homomorphisms from $A$ to $B$. We hope to explore this connection further in a future work. For our present purposes the affineness of the polynomial $M_d(f)$ will follow for $\mathbb{F}_q$-linear functions $f$.

\begin{thm} \label{pr:fundrel}
For all $f \in \text{Hom}_+(A,B)$, for all positive integers $d$, and for all $c \in A(d)$ we have
\begin{eqnarray} \label{eqn:fundrel}
M_d(f)(z+c) - M_d(f)(z) = M_d(1)f(c).
\end{eqnarray}
\begin{proof}
 Let $c \in A(d)$. Then we have
\begin{eqnarray*}
 M_d(f)(z+c) &=& \sum_{b \in A_+(d)}\left(f(b)\prod_{a \in A_+(d) \setminus \{b\}}(z +c - a)\right) \\
 &=& \sum_{b \in A_+(d)}\left(f(b - c + c)\prod_{a \in A_+(d) \setminus \{b\}}(z - (a-c))\right) \\
  &=& \sum_{b \in A_{+}(d)}\left((f(b - c) + f(c))\prod_{a \in A_{+}(d) \setminus \{b\}}(z - (a-c))\right) \\
&=& \sum_{b \in A_{+}(d)}\left(f(b-c)\prod_{a \in A_{+}(d) \setminus \{b\}}(z - (a-c))\right) + \\
& & + \ f(c)\sum_{b \in A_{+}(d)}\prod_{a \in A_{+}(d) \setminus \{b\}}(z - (a-c)).\end{eqnarray*}
The map from $A_{+}(d)$ to $A_{+}(d)$ sending $a$ to $a+c$ for all $a \in A_{+}(d)$ is a bijection of sets. Thus as $a$ runs over all elements of $A_+(d)\setminus \{b\}$, $a-c$ runs over all elements of $A_+(d)\setminus \{b-c\}$, and similarly for $b-c$. Making the change of variables $u =a-c$ and $w = b-c$ in the last line above and re-indexing accordingly gives
$$\sum_{w \in A_{+}(d)}\left(f(w)\prod_{u \in A_{+}(d) \setminus \{w\}}(z - u)\right) + f(c)\sum_{w \in A_{+}(d)}\prod_{u \in A_{+}(d) \setminus \{w\}}(z - u),$$
which is equal to $M_d(f)(z) + M_d(1)f(c)$.
\end{proof}
\end{thm}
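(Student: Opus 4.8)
The plan is to expand $M_d(f)(z+c)$ directly from its definition, exploit the additivity of $f$ to peel off the constant contribution $f(c)$, and then re-index the two resulting sums by means of a translation bijection of $A_+(d)$. First I would substitute $z \mapsto z+c$ into the defining sum and rewrite each factor as $z + c - a = z - (a-c)$, so that the translation is carried by the index $a$ rather than by the variable $z$. The crucial structural fact, and the only place the hypothesis $c \in A(d)$ enters, is that adding a polynomial of degree strictly less than $d$ to a monic polynomial of degree $d$ leaves it monic of degree $d$; hence $a \mapsto a-c$ is a bijection of $A_+(d)$ onto itself. This is what will legitimize the subsequent re-indexing.

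Next I would use that $f \in \text{Hom}_+(A,B)$ to write $f(b) = f(b-c) + f(c)$, thereby splitting $M_d(f)(z+c)$ into two pieces. In the first piece the weight is $f(b-c)$ and the product runs over the factors $z-(a-c)$ with $a \neq b$; applying the change of variables $w = b-c$ and $u = a-c$ converts it into $\sum_{w} f(w)\prod_{u \neq w}(z-u)$, which is exactly $M_d(f)(z)$. In the second piece the constant $f(c)$ factors out of the sum, and the surviving sum $\sum_{b}\prod_{a \neq b}(z-(a-c))$ re-indexes under the same substitution to $\sum_{w}\prod_{u \neq w}(z-u)$, which is precisely $M_d(1)(z)$.

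Finally I would invoke the remark that $M_d(1)(z)$ is in fact a constant: it is the derivative of $\prod_{a \in A_+(d)}(z-a) = e_d(z-\theta^d)$, which by Theorem \ref{carlitzcoeff} equals $(-1)^d D_d/L_d$, the constant denoted $M_d(1)$. Assembling the two pieces then yields $M_d(f)(z+c) = M_d(f)(z) + M_d(1)f(c)$, as claimed.

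I do not expect a serious obstacle: the argument is essentially bookkeeping, with all the genuine content packaged into the stability of $A_+(d)$ under translation by $A(d)$ together with the additivity of $f$. The single point requiring care is to keep the correspondence between the omitted index ($b$, resp.\ $w$) and the factor it removes consistent through the change of variables, so that the diagonal term $a = b$ maps correctly to $u = w$ and the two re-indexed sums really do reproduce $M_d(f)(z)$ and $M_d(1)(z)$ rather than shifted variants of them.
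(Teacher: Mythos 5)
Your proposal is correct and follows essentially the same route as the paper's proof: substitute $z+c$, use additivity to write $f(b)=f(b-c)+f(c)$, and re-index both resulting sums via the translation bijection of $A_+(d)$ by $c \in A(d)$, identifying the second sum with the constant $M_d(1)$. The one detail you spell out more explicitly than the paper --- that translation by an element of degree $<d$ preserves monicity and degree, which is exactly where the hypothesis $c\in A(d)$ is used --- is a welcome clarification rather than a deviation.
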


\begin{defn}
We denote the space of $\mathbb{F}_q$-linear maps from $A$ to $B$ by $L_{\mathbb{F}_q}(A, B)$.
\end{defn}
\begin{cor} \label{cor:linear}
Let $f \in L_{\mathbb{F}_q}(A, B)$. Let $P_d(f)(z) := M_d(f)(z) - M_d(f)(0)$. Then 
\begin{enumerate}
\item $P_d(f)(c) = M_d(1)f(c)$ for all $c \in A(d)$, and $P_d(f)(z)$ is the unique polynomial in $B[z]$ of degree strictly less than $q^d$ with this property;
\item $P_d(f)(z)$ is $\mathbb{F}_q$-linear in $z$.
\end{enumerate}
\begin{proof}
Substituting $z = 0$ into \eqref{eqn:fundrel}, we see that $P_d(f)(c)$ agrees with $M_d(1)f(c)$ for all $c$ in $A(d)$ (recall that $M_d(1)$ is a constant in $A$). It is clear from definitions that the degree of $P_d(f)(z)$ is at most $q^d - 1$. Hence uniqueness follows from \ref{degreelemma}.

Using \ref{degreelemma}, a similar argument on the number of roots of $P_d(f)(z+c) - P_d(f)(z) - P_d(f)(c)$, first for $c \in A(d)$ then for $c$ and indeterminate, shows that that $P_d(f)$ is additive. Likewise for $\mathbb{F}_q$-linearity.
\end{proof}
\end{cor}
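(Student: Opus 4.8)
The plan is to build everything on the fundamental relation of Theorem \ref{pr:fundrel} together with the degree count of Lemma \ref{degreelemma}, exploiting that $A(d)$ is an $\mathbb{F}_q$-vector space with exactly $q^d$ elements while $P_d(f)(z)$ has degree at most $q^d - 1$. For part (1) I would simply specialize the fundamental relation. Since an $\mathbb{F}_q$-linear map is in particular additive, $f \in \mathrm{Hom}_+(A,B)$, so \eqref{eqn:fundrel} applies; setting $z = 0$ gives $M_d(f)(c) - M_d(f)(0) = M_d(1) f(c)$, that is, $P_d(f)(c) = M_d(1) f(c)$ for every $c \in A(d)$. For uniqueness, note that $\deg_z P_d(f) \leq q^d - 1$, because subtracting the constant $M_d(f)(0)$ does not raise the degree of $M_d(f)$, which is already strictly less than $q^d$; any competitor with the same values on the $q^d$ points of $A(d)$ would differ from $P_d(f)$ by a polynomial of degree $< q^d$ having $q^d$ distinct roots in the integral domain $B$, forcing the difference to vanish by Lemma \ref{degreelemma}.

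For the additivity half of part (2) the strategy is a two-stage root count. First fix $c \in A(d)$ and consider $Q_c(z) := P_d(f)(z+c) - P_d(f)(z) - P_d(f)(c) \in B[z]$, of degree $< q^d$ in $z$. For $b \in A(d)$ one has $b + c \in A(d)$, so part (1) and the additivity of $f$ give $P_d(f)(b+c) = M_d(1)f(b+c) = M_d(1)(f(b)+f(c)) = P_d(f)(b) + P_d(f)(c)$; hence $Q_c$ vanishes at all $q^d$ points of $A(d)$ and therefore $Q_c \equiv 0$ in $B[z]$ by Lemma \ref{degreelemma}. I then promote $c$ to an indeterminate: the expression $R(z,c) := P_d(f)(z+c) - P_d(f)(z) - P_d(f)(c)$, viewed as a polynomial in $c$ over the integral domain $B[z]$, has degree $< q^d$ in $c$ and, by the previous step, vanishes at each of the $q^d$ constants $c \in A(d)$; a final application of Lemma \ref{degreelemma} forces $R \equiv 0$, so that $P_d(f)(z+c) = P_d(f)(z) + P_d(f)(c)$ holds identically.

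The $\mathbb{F}_q$-homogeneity is lighter, since $\mathbb{F}_q$ is finite and no indeterminate promotion is needed. For each $\lambda \in \mathbb{F}_q$, set $S_\lambda(z) := P_d(f)(\lambda z) - \lambda P_d(f)(z)$, again of degree $< q^d$; using that $\lambda b \in A(d)$ and that $f$ is $\mathbb{F}_q$-linear, part (1) gives $S_\lambda(b) = M_d(1)(f(\lambda b) - \lambda f(b)) = 0$ for all $b \in A(d)$, and the degree count yields $S_\lambda \equiv 0$. Having shown that $P_d(f)$ is both additive and $\mathbb{F}_q$-homogeneous, I would invoke the classical structure of additive polynomials in characteristic $p$ to conclude that $P_d(f)(z) = \sum_i l_i z^{q^i}$, placing it in the ring of $\mathbb{F}_q$-linear polynomials.

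The only real obstacle is the bootstrap inside the additivity step. Part (1) pins down $P_d(f)$ only on the \emph{finite} set $A(d)$, so one cannot directly test a genuine two-variable polynomial identity in $z$ and $c$; the crux is to organize the argument so that at each stage the polynomial under scrutiny genuinely has degree strictly below $q^d$ while accumulating $q^d$ distinct roots, and to pass from $B$ to the polynomial ring $B[z]$, which remains an integral domain, precisely in order to treat $c$ as a variable. Everything else is a routine reduction to Lemma \ref{degreelemma}.
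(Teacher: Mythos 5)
Your proposal is correct and follows essentially the same route as the paper: specialize the fundamental relation \eqref{eqn:fundrel} at $z=0$ for part (1), use Lemma \ref{degreelemma} for uniqueness, and prove additivity by the two-stage root count on $P_d(f)(z+c)-P_d(f)(z)-P_d(f)(c)$, first for $c\in A(d)$ and then with $c$ an indeterminate, exactly as the paper sketches. Your write-up merely fills in the details the paper leaves implicit (including the passage from additivity plus $\mathbb{F}_q$-homogeneity to the form $\sum_i l_i z^{q^i}$), and does so correctly.
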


\begin{cor} \label{cor:affine}
Let $f \in L_{\mathbb{F}_q}(A, B)$. Then the polynomial $M_d(f)(z)$ is affine and has degree at most $q^{d-1}$. \begin{proof}
The polynomials $M_d(f)(z)$ and $P_d(f)(z)$ differ by an element of $B$. Hence $M_d(f)(z)$ is affine and their degrees are equal. 
\end{proof}
\end{cor}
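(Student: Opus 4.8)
The plan is to obtain both assertions directly from Corollary \ref{cor:linear} by exploiting the rigid shape of $\mathbb{F}_q$-linear polynomials. Recall that $P_d(f)(z) = M_d(f)(z) - M_d(f)(0)$, so that
$$M_d(f)(z) = P_d(f)(z) + M_d(f)(0),$$
and the two polynomials differ only by the constant $M_d(f)(0) \in B$.

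First I would settle affineness. By Corollary \ref{cor:linear}(2), the polynomial $P_d(f)(z)$ is $\mathbb{F}_q$-linear in $z$, hence of the form $\sum_{i=0}^n l_i z^{q^i}$ with $l_i \in B$. Setting $l_{-1} := M_d(f)(0)$ and adding, the polynomial $M_d(f)(z) = l_{-1} + \sum_{i=0}^n l_i z^{q^i}$ is of exactly the form defining $\mathcal{A}(B)$. Since $B$ is an integral domain of characteristic $p$, this shows that $M_d(f)(z)$ is affine.

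Then I would bound the degree. Adding the constant $M_d(f)(0)$ leaves the degree unchanged; if $P_d(f)$ is constant, then both polynomials have degree $0 \leq q^{d-1}$ and there is nothing to prove, so $\deg_z M_d(f) = \deg_z P_d(f)$. The point to exploit is that a nonzero $\mathbb{F}_q$-linear polynomial has degree equal to a power of $q$, namely $q^n$ where $n$ is the largest index with $l_n \neq 0$. By Corollary \ref{cor:linear}(1), $P_d(f)(z)$ has degree strictly less than $q^d$, forcing $q^n < q^d$, that is, $n \leq d-1$. Hence $\deg_z M_d(f) = q^n \leq q^{d-1}$, as claimed.

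There is no substantive obstacle here; the corollary is a clean consequence of the structural results already established for $P_d(f)$. The only subtlety worth flagging is the sharpening of the degree bound from ``strictly less than $q^d$'' to ``at most $q^{d-1}$'': this is not automatic for a general polynomial of degree less than $q^d$, and relies crucially on the fact that the degree of a nonzero $\mathbb{F}_q$-linear polynomial is always an exact power of $q$.
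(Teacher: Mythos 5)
Your proof is correct and follows essentially the same route as the paper's: both write $M_d(f)(z) = P_d(f)(z) + M_d(f)(0)$ and read off affineness and the degree from Corollary \ref{cor:linear}. The one place you go beyond the paper is in spelling out why ``degree strictly less than $q^d$'' sharpens to ``at most $q^{d-1}$'' --- namely that a nonzero $\mathbb{F}_q$-linear polynomial has degree an exact power of $q$ --- a step the paper leaves implicit but which is indeed needed for the stated bound.
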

\begin{rem}
It is important to observe that $M_d(f)(0) \neq 0$ in general. Indeed, this will be seen in the case of $M_d(\chi_t)$.
\end{rem}

The next corollary is special in its use of the \emph{digit principal} which is basic to the domain of function field arithmetic. In it we prove that the operators $M_d$ are ``multiplicative" on functions $f^\beta$ for $\mathbb{F}_q$-linear functions $f$ and certain special $\beta$.
\begin{cor} \label{cor:digit}
Let $f \in L_{\mathbb{F}_q}(A, B)$, and let $\beta$ be a non-negative integer as above. Suppose that $l(\beta) < q$. Then
$$M_d(f^\beta)(z) = M_d(1)^{1 - l(\beta)}\prod_{i = 0}^e \left( M_d(f^{q^i})(z) \right)^{\beta_i}.$$ 
\begin{proof}
The ring $B$ is a domain, and hence $M_d(1)^{-1} M_d(f)(z)$ is the unique polynomial over $B$ of degree less than $q^d$ which agrees with $f(z)$ on $A_+(d)$. By \autoref{cor:affine}, for all $i$ the polynomial approximation $M_d(f^{q^i})(z)$ has degree at most $q^{d-1}$. Write $f(z)^\beta = \prod_{i = 0}^e (f(z)^{q^i})^{\beta_i}$. Replacing $f(z)^{q^i}$ with its corresponding polynomial representation 
 $$M_d(1)^{-1} M_d(f^{q^i})(z)$$ gives a polynomial approximation of degree at most $q^{d-1} l(\beta)$ which agrees with $f(z)^\beta$ on $A_+(d)$. Hence if $l(\beta) < q$, then $q^{d-1} l(\beta) < q^d$ and Lemma \ref{degreelemma} implies 
 $$M_d(1)^{-1} M_d(f^\beta)(z) = \prod_{i = 0}^e [M_d(1)^{-1} M_d(f^{q^i})(z)]^{\beta_i}.$$ 
\end{proof}
\end{cor}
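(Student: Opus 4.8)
The plan is to leverage the uniqueness characterization of the Carlitz approximation together with the degree bound of Corollary~\ref{cor:affine} and the base-$q$ (digit) expansion of $\beta$. Since $B$ is an integral domain and $M_d(1) = (-1)^d D_d/L_d$ is a nonzero constant of $A$, it is invertible in the fraction field $\mathrm{Frac}(B)$; thus $M_d(1)^{-1} M_d(f)(z)$ is the unique polynomial of degree strictly less than $q^d$ that agrees with $f$ on $A_+(d)$. First I would record that each Frobenius twist $f^{q^i}$ is again $\mathbb{F}_q$-linear: additivity is preserved because $(f(a) + f(b))^{q^i} = f(a)^{q^i} + f(b)^{q^i}$, and $\mathbb{F}_q$-linearity because $c^{q^i} = c$ for $c \in \mathbb{F}_q$. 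Hence Corollary~\ref{cor:affine} applies to every $f^{q^i}$, giving that $M_d(f^{q^i})(z)$ is affine of degree at most $q^{d-1}$.

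The heart of the argument is then a comparison of two polynomials on the $q^d$ points of $A_+(d)$. Writing $\beta = \sum_{i=0}^e \beta_i q^i$ in base $q$, one has $f(a)^\beta = \prod_{i=0}^e \left( f(a)^{q^i} \right)^{\beta_i}$ for every $a$, so the polynomial
$$ \prod_{i = 0}^e \left( M_d(1)^{-1} M_d(f^{q^i})(z) \right)^{\beta_i} $$
agrees with $f(z)^\beta$ on all of $A_+(d)$, since each factor $M_d(1)^{-1}M_d(f^{q^i})(z)$ agrees with $f(z)^{q^i}$ there. Its degree is at most $\sum_{i=0}^e \beta_i q^{d-1} = q^{d-1} l(\beta)$. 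The same function $f(z)^\beta$ is matched on $A_+(d)$ by $M_d(1)^{-1} M_d(f^\beta)(z)$, which has degree strictly less than $q^d$.

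To conclude I would invoke Lemma~\ref{degreelemma} over the domain $\mathrm{Frac}(B)$: two polynomials that agree on the $q^d$ points of $A_+(d)$ and both have degree strictly less than $q^d$ must coincide. This forces
$$ M_d(1)^{-1} M_d(f^\beta)(z) = \prod_{i = 0}^e \left( M_d(1)^{-1} M_d(f^{q^i})(z) \right)^{\beta_i}, $$
and collecting the powers of $M_d(1)^{-1}$ — there are $l(\beta)$ of them on the right against a single one on the left — yields the stated identity with prefactor $M_d(1)^{1 - l(\beta)}$.

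The one genuine constraint, and the step I expect to be decisive, is the degree bookkeeping: the uniqueness argument goes through only if the product polynomial has degree strictly below $q^d$, that is $q^{d-1} l(\beta) < q^d$, which is exactly the hypothesis $l(\beta) < q$. This is precisely where the \emph{digit principle} enters, and it is why the digit bound cannot be dropped. Everything else is the routine verification that Frobenius twists remain $\mathbb{F}_q$-linear and that the interpolation values multiply as claimed.
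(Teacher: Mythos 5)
Your proposal is correct and follows essentially the same route as the paper's own proof: write $\beta$ in base $q$, replace each $f^{q^i}$ by its degree-$\le q^{d-1}$ Carlitz approximation, use $l(\beta)<q$ to keep the product's degree below $q^d$, and conclude by the uniqueness supplied by Lemma~\ref{degreelemma}. The extra details you supply (Frobenius twists staying $\mathbb{F}_q$-linear, working in $\mathrm{Frac}(B)$, and counting the powers of $M_d(1)$) are all consistent with, and merely make explicit, what the paper leaves implicit.
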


\begin{proof}[Proof of Theorem \ref{thm:loggrowth}]
 Suppose $l(\beta) < q$. For the first claim, expand 
\begin{eqnarray} \label{eqn:genfn}
\frac{M_d(\chi_t^\beta)(z)}{\prod_{a \in A_{+}(d)}(z-a)} = \sum_{a \in A_+(d)}\frac{\chi_t(a)^\beta}{z-a}
\end{eqnarray} 
in a power series in the variable $1/z$ using geometric series. The coefficient of $1/z^{k+1}$ is $S_d(\chi_t^\beta,k)$. By \autoref{cor:digit}, the order with which $1/z$ divides the left hand side of \eqref{eqn:genfn} is $q^{d} - q^{d-1}l(\beta)$. Hence the first non-zero coefficient of the corresponding power series occurs only when $k \geq q^d - q^{d-1}l(\beta) - 1$.
 
 For the second claim, observe that $S_e(\chi_t^\beta, k)$ is the coefficient of $x^{-e}$ in $z(\chi_t^\beta, x, k)$.
\end{proof}

\subsection{Wagner Coefficients and $L(\chi_t,1)$}\label{sec:wag}
Recall that we defined $\mathbb{C}_\infty$ to be the completion of an algebraic closure of $K_\infty := \mathbb{F}_q((\frac{1}{\theta}))$ equipped with the unique extension of the absolute value $|\cdot|$ making $K_\infty$ complete and normalized so that $|\theta| = q$. 

To the function $\chi_t(z)$ we may associate a formal series which we call its \emph{Wagner series}:
$$\sum_{j = 0}^\infty b_{j}(\chi_t) \frac{e_{j}(z)}{D_j} \in K[[z,t]],$$
where the coefficients $b_j(\chi_t)$ are defined for $j \geq 0$ by
$$b_j(\chi_t) := (-1)^j\frac{L_j}{D_j}M_j(\chi_t)(0).$$
\begin{rem}
At first glance, the definition we have given of the Wagner coefficients of $\chi_t$ appears different than that which was given by Carlitz in \cite{Ca-40}, but it can be shown that they are indeed the same. For lack of space, we do not pursue this. Our definition is given with the intention of making a fast connection with Pellarin's series.
\end{rem}
We will show that for $t \in \mathbb{C}_\infty$ such that $|t| < q$ the formal Wagner series above converges for all $z \in \mathbb{C}_\infty$, and that it agrees with the image of the evaluation character $\chi_t(z)$ for $z \in A$. 

Taking these two facts for granted for the moment, the following theorem is quite interesting and relates the Wagner series above to $L(\chi_t,1)$. 

\begin{thm} \label{chicoeffs}
For all non-negative integers $d$ we have
$$\sum_{a \in A_+(d)}\frac{\chi_t(a)}{a} = (-1)^{d}\frac{b_d(\chi_t)}{L_d}.$$
\begin{proof}
Recall \ref{cor:affine} that 
$$M_d(\chi_t)(z) = \sum_{a \in A_+(d)}\chi_t(a)\frac{e_d(z-a)}{z-a}$$
is an affine polynomial in the variable $z$ over $A[t]$, with constant coefficient equal to $M_d(\chi_t)(0) = (-1)^d(D_d /L_d)b_d(\chi_t)$ by definition. 
We have 
$$\frac{M_d(\chi_t)(z)}{e_d(z - \theta^d)} = \sum_{a \in A_+(d)}\frac{\chi_t(a)}{z-a}.$$
Observing that $e_d(-\theta^d) = (-1)^{q^d}D_d$, evaluation at $z = 0$ yields
$$(-1)^{d+1} \frac{b_d(\chi_t)}{L_d} = -\sum_{a \in A_+(d)}\frac{\chi_t(a)}{a}.$$
\end{proof}
\end{thm}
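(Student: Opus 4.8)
The plan is to relate the Carlitz approximation polynomial $M_d(\chi_t)(z)$ to the generating function $\sum_{a \in A_+(d)} \chi_t(a)/(z-a)$ and then extract the desired identity by a single well-chosen evaluation at $z = 0$. First I would record that by the defining formula for $M_d$ and the factorization $e_d(z-a) = \prod_{a' \in A_+(d)}(z-a')$ divided by the missing factor, one has the clean rational-function identity
\begin{equation*}
\frac{M_d(\chi_t)(z)}{e_d(z-\theta^d)} = \sum_{a \in A_+(d)} \frac{\chi_t(a)}{z-a},
\end{equation*}
since $e_d(z-\theta^d) = \prod_{a \in A_+(d)}(z-a)$ and each summand $\chi_t(a)\,e_d(z-a)/(z-a)$ divided by $e_d(z-\theta^d)$ collapses to $\chi_t(a)/(z-a)$. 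This is the computational heart of the argument, but it is essentially formal.

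Next I would evaluate both sides at $z = 0$. The right-hand side becomes $\sum_{a\in A_+(d)} \chi_t(a)/(-a) = -\sum_{a \in A_+(d)} \chi_t(a)/a$, which is the quantity we want (up to sign). For the left-hand side I need two constants: the value $e_d(-\theta^d)$ appearing in the denominator, and the constant term $M_d(\chi_t)(0)$ appearing in the numerator. By \autoref{cor:affine} the polynomial $M_d(\chi_t)(z)$ is affine, so its value at $z=0$ is exactly its constant coefficient, and the definition of the Wagner coefficient $b_d(\chi_t) = (-1)^d (L_d/D_d) M_d(\chi_t)(0)$ rearranges to $M_d(\chi_t)(0) = (-1)^d (D_d/L_d) b_d(\chi_t)$.

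The remaining ingredient is the evaluation $e_d(-\theta^d) = (-1)^{q^d} D_d$. I would justify this from $e_d(z-\theta^d) = \prod_{a\in A_+(d)}(z-a)$: setting $z=0$ gives $\prod_{a \in A_+(d)}(-a) = (-1)^{q^d}\prod_{a \in A_+(d)} a = (-1)^{q^d} D_d$, using that there are $q^d$ monic polynomials of degree $d$ and that $D_d$ is defined as their product. Assembling these pieces, the left-hand side at $z=0$ equals
\begin{equation*}
\frac{(-1)^d (D_d/L_d)\, b_d(\chi_t)}{(-1)^{q^d} D_d} = \frac{(-1)^{d-q^d}}{L_d}\, b_d(\chi_t),
\end{equation*}
and setting this equal to $-\sum_{a\in A_+(d)}\chi_t(a)/a$ and clearing the sign yields the claim after checking the parity of the sign. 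The one point demanding genuine care is this sign bookkeeping: the exponent $q^d$ is odd exactly when $p$ is odd, and I must verify that $(-1)^{q^d}$ combines with the factors of $(-1)$ from evaluating at $z=0$ to produce precisely $(-1)^d$ in the final identity; in characteristic $2$ the signs are moot, so the two cases should be reconciled. This sign reconciliation is the only genuine obstacle, as everything else is an algebraic identity.
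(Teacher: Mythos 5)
Your proposal is correct and follows essentially the same route as the paper: the rational-function identity, evaluation at $z=0$, the computation $e_d(-\theta^d)=(-1)^{q^d}D_d$, and the definition of $b_d(\chi_t)$. The sign bookkeeping you flag resolves immediately since $(-1)^{q^d}=-1$ in characteristic $p$ for every $p$ (in characteristic $2$ one has $-1=1$), so $(-1)^{d-q^d}=(-1)^{d+1}$ and the claimed identity follows.
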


\begin{rem}
Pellarin has shown, and one may verify immediately using Sheat's \cite{Sh98}, Theorem 1.4 (a), which characterizes the vanishing of the power sums $S_d(k)$, that
$$\sum_{a \in A_+(d)}\frac{\chi_t(a)}{a} = \frac{(-1)^d}{L_d}\prod_{j = 0}^{d-1}(t - \theta^{q^j}).$$
Thus by the proposition above we obtain a factorization over $A[t]$ of $b_{j}(\chi_t)$. Namely,
$$b_{j}(\chi_t) = \prod_{l = 0}^{j-1}(t - \theta^{q^l}).$$
The reader should compare this with the $L_i$. Indeed, 
$$\left.\frac{(-1)^{j-1}b_j(\chi_t)}{(t-\theta)L_{j-1}}\right|_{t = \theta} = 1.$$ 
Similarly, we have
$$b_j(\chi_t)(\theta^j) = D_j.$$
\end{rem}

We now proceed to show that the Wagner series for $\chi_t$ converges and agrees with $\chi_t$ on $A$. We prove two basic lemmas, the second of which motivates the definition we have given of the Wagner coefficients $b_i(\chi_t)$.

Let $B$ be an $A$-algebra that is an integral domain. We begin with the preliminary observation that any $\mathbb{F}_q$-linear polynomial $f \in B[z]$ of degree $q^d$ may be written as
$$f(z) = \sum_{i=0}^d \alpha_i e_i(z),$$
and the coefficients $\alpha_i \in B$ are uniquely determined by $f$. Indeed, the polynomials $e_i(z)$ are monic of degree $q^i$ in $z$, and hence the coefficients $\alpha_j$ may be found recursively from the coefficients of the powers $z^{q^i}$ of $f$.

Let $f \in L_{\mathbb{F}_q}(A,B)$. Recall that we have defined $P_d(f)(z)  = M_d(f)(z) - M_d(f)(0)$, and by Corollary \ref{cor:linear} this polynomial is $\mathbb{F}_q$-linear of degree at most $q^{d-1}$. For $d \geq 1$ and $0 \leq i < d$ we define the coefficients $\alpha_{d,i}(f)$ by the equality
\begin{eqnarray}\label{alphadef}
(-1)^{d}\frac{L_{d}}{D_{d}}P_d(f)(z) = \sum_{i = 0}^{d-1}\alpha_{d,i}(f)e_i(z).
\end{eqnarray}
\begin{lem} \label{lem:ccoeff1}
Let $f \in L_{\mathbb{F}_q}(A, B)$. For $d \geq 1$ we have the following recursive formula:
$$(-1)^{d+1}\frac{L_{d+1}}{D_{d+1}}P_{d+1}(f)(z) - \alpha_{d+1,d}(f)e_d(z) = (-1)^{d}\frac{L_d}{D_d}P_d(f)(z).$$
\begin{proof}
Again, we appeal to Lemma \ref{degreelemma}. Both the left-hand-side and right-hand-side of the equality above are non-constant polynomials with coefficients in the integral domain $B$ that have degree strictly less than $q^d$. For all $a \in A(d)$ we have $e_d(a) = 0$ by definition. Hence both sides are equal when evaluated on $A(d)$. Thus they are identically equal in $B[z]$.
\end{proof}
\end{lem}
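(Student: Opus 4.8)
The plan is to prove the identity by the same device used throughout this section: exhibit both sides as polynomials of degree strictly less than $q^d$ that agree on the $q^d$ elements of $A(d)$, and then conclude via Lemma \ref{degreelemma}. First I would pin down the degrees. By Corollary \ref{cor:linear}, $P_{d+1}(f)(z)$ is $\mathbb{F}_q$-linear, and by Corollary \ref{cor:affine} it has degree at most $q^d$; thus its expansion \eqref{alphadef} reads $(-1)^{d+1}\frac{L_{d+1}}{D_{d+1}}P_{d+1}(f)(z) = \sum_{i=0}^{d}\alpha_{d+1,i}(f)e_i(z)$. Because the $e_i$ have strictly increasing degrees $q^i$, the only term of degree $q^d$ here is $\alpha_{d+1,d}(f)e_d(z)$, so after subtracting it the left-hand side equals $\sum_{i=0}^{d-1}\alpha_{d+1,i}(f)e_i(z)$, of degree at most $q^{d-1}$. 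The right-hand side is $\sum_{i=0}^{d-1}\alpha_{d,i}(f)e_i(z)$ by \eqref{alphadef}, again of degree at most $q^{d-1}$. Hence both sides, and their difference, have degree strictly less than $q^d$.

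Next I would evaluate both sides at an arbitrary $a \in A(d)$, a set of exactly $q^d$ points. Since $e_d(z) = \prod_{b \in A(d)}(z-b)$ vanishes on $A(d)$, the term $\alpha_{d+1,d}(f)e_d(a)$ drops out and the left-hand side becomes $(-1)^{d+1}\frac{L_{d+1}}{D_{d+1}}P_{d+1}(f)(a)$. As $A(d)\subseteq A(d+1)$, Corollary \ref{cor:linear} gives $P_{d+1}(f)(a) = M_{d+1}(1)f(a)$, and substituting $M_{d+1}(1) = (-1)^{d+1}D_{d+1}/L_{d+1}$ the constants cancel to leave $f(a)$. Running the identical computation on the right-hand side with $P_d(f)(a) = M_d(1)f(a)$ and $M_d(1) = (-1)^d D_d/L_d$ again produces $f(a)$. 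Thus both sides agree at all $q^d$ points of $A(d)$, and since their difference has degree strictly less than $q^d$, Lemma \ref{degreelemma} forces it to vanish identically in $B[z]$.

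The computations here are short, so the real point to watch is the bookkeeping of the normalizing constants: the whole argument turns on the prefactors $(-1)^d L_d/D_d$ appearing in \eqref{alphadef} being exactly reciprocal to $M_d(1) = (-1)^d D_d/L_d$, so that evaluation on $A(d)$ cancels every constant and leaves the bare value $f(a)$ at both levels $d$ and $d+1$. Once one sees that this cancellation is what drives both sides to the common value $f(a)$, and that $e_d(z)$ is precisely the unique contribution that would otherwise push the degree up to $q^d$, the proof is immediate.
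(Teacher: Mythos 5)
Your proof is correct and follows essentially the same route as the paper's: both sides are polynomials of degree strictly less than $q^d$ that agree on the $q^d$ points of $A(d)$ because $e_d$ vanishes there, so Lemma \ref{degreelemma} forces them to coincide. You simply make explicit the degree bookkeeping and the cancellation of $M_d(1)$ against $(-1)^d L_d/D_d$ that the paper leaves implicit.
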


\begin{lem} \label{lem:ccoeff2}
Let $f \in L_{\mathbb{F}_q}(A, B)$. Let $d \geq 0$. Then 
$$(-1)^d\frac{L_d}{D_d}M_d(f)(0) = D_d\alpha_{d+1,d}(f).$$
\begin{proof}
There are two cases:

First suppose $d=0$. Then 
$$(-1)^d\frac{L_d}{D_d}M_d(f)(0) = f(1),$$
and from \eqref{alphadef}
$$f(1) = (-1)^{d+1}\frac{L_{d+1}}{D_{d+1}}P_{d+1}(f)(1) = D_d\alpha_{d+1,d}(f).$$

Now suppose $d \geq 1$. Let $a \in A_+(d) \subseteq A(d+1)$. Then 
\begin{eqnarray*}
(-1)^{d+1}\frac{L_{d+1}}{D_{d+1}}P_{d+1}(f)(a) &=& f(a), \\ 
(-1)^d \frac{L_d}{D_d}M_d(f)(a) &=& f(a), \text{ and } \\
e_{d}(a) &=& D_d.
\end{eqnarray*} 
Hence by the previous lemma
$(-1)^d\frac{L_d}{D_d}M_d(f)(0) = D_d\alpha_{d+1,d}(f)$.
\end{proof}
\end{lem}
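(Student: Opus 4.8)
The plan is to split into the degenerate case $d = 0$ and the generic case $d \geq 1$, and in each case to pin down the coefficient $\alpha_{d+1,d}(f)$ by evaluating both the defining relation \eqref{alphadef} and the recursion of Lemma \ref{lem:ccoeff1} at well-chosen arguments in $A$, relying throughout on the identity $M_d(1) = (-1)^d D_d/L_d$ and on the interpolation properties of $M_d(f)$ and $P_d(f)$ established in Corollary \ref{cor:linear}.

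For the base case $d = 0$ I would simply compute both sides. Since $A_+(0) = \{1\}$, the polynomial $M_0(f)(z)$ is the constant $f(1)$, and because $L_0 = D_0 = 1$ the left-hand side is $(-1)^0 (L_0/D_0) M_0(f)(0) = f(1)$. On the other side, the $d = 1$ instance of \eqref{alphadef} reads $-(L_1/D_1) P_1(f)(z) = \alpha_{1,0}(f)\, e_0(z)$; since $e_0(z) = z$ and $1 \in A(1)$, Corollary \ref{cor:linear} gives $P_1(f)(1) = M_1(1) f(1) = -(D_1/L_1) f(1)$, so substituting $z = 1$ yields $\alpha_{1,0}(f) = f(1) = D_0 \alpha_{1,0}(f)$, matching the left-hand side.

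For $d \geq 1$ the engine is Lemma \ref{lem:ccoeff1}, which I would evaluate at an arbitrary $a \in A_+(d)$. Three observations drive the calculation. First, $a \in A(d+1)$, so Corollary \ref{cor:linear} applies to $P_{d+1}(f)$ and, after clearing the constant $M_{d+1}(1) = (-1)^{d+1} D_{d+1}/L_{d+1}$, the first term on the left collapses to $f(a)$. Second, since $e_d$ is $\mathbb{F}_q$-linear (Theorem \ref{carlitzcoeff}) and $a = \theta^d + r$ with $r \in A(d)$ lying in the kernel of $e_d$, one has $e_d(a) = e_d(\theta^d) = D_d$, so the middle term contributes exactly $-\alpha_{d+1,d}(f) D_d$. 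Third, on the right-hand side I would invoke the interpolation property $M_d(f)(a) = M_d(1) f(a)$ on $A_+(d)$ together with $P_d(f) = M_d(f) - M_d(f)(0)$ to rewrite $(-1)^d (L_d/D_d) P_d(f)(a) = f(a) - (-1)^d (L_d/D_d) M_d(f)(0)$. Equating the two sides, the common term $f(a)$ cancels and precisely the asserted identity $D_d \alpha_{d+1,d}(f) = (-1)^d (L_d/D_d) M_d(f)(0)$ remains.

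The step I expect to be the main obstacle is keeping straight that $a \in A_+(d)$ sits exactly on the boundary of the interpolation machinery: it lies in $A(d+1)$ but not in $A(d)$, so Corollary \ref{cor:linear} may be used for $P_{d+1}(f)$ at $a$ but must be replaced by the weaker interpolation statement $M_d(f)(a) = M_d(1) f(a)$ when handling $M_d(f)$ at $a$. Tracking this distinction, and correctly accounting for the constant $M_d(f)(0)$ that is hidden inside $P_d(f)$ but absent from $M_d(f)$, is exactly what forces the two $f(a)$ terms to cancel; everything else is routine bookkeeping with the signs and the ratios $D_\bullet/L_\bullet$.
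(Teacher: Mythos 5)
Your proof is correct and follows essentially the same route as the paper: the same split into $d=0$ and $d\geq 1$, the same evaluation of the recursion from Lemma \ref{lem:ccoeff1} at a point $a \in A_+(d) \subseteq A(d+1)$ using $e_d(a) = D_d$ and the two interpolation identities, with the $f(a)$ terms cancelling to leave the claimed formula. No gaps; your observation that $e_d(a) = e_d(\theta^d) = D_d$ via $\mathbb{F}_q$-linearity is just a slightly more explicit justification of a step the paper asserts directly.
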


\begin{thm}
Let $t \in \mathbb{C}_\infty$ be such that $|t| < q$, then for all $z \in \mathbb{C}_\infty$ the sum
$$\sum_{j = 0}^\infty b_j(\chi_t) \frac{e_{j}(z)}{D_j}$$
converges and thus gives analytic interpolation to the quasi-character $\chi_t(z)$ originally defined on $A$.
\begin{proof}
For $t$ as above, it is easy to see that 
$$\sum_{a \in A_+(d)}\chi_t(a) a^{-1} \rightarrow 0 \text{ as } d \rightarrow \infty,$$ and hence $b_d(\chi_t) / L_d \rightarrow 0$ as $d \rightarrow \infty$ as well by Theorem \ref{chicoeffs}. This is exactly the condition in \cite{Go-89}, Theorem 3.1.10 required for convergence of the first expansion. 

To see that this series agrees with $\chi_t(z)$ for $z \in A$, let $a \in A(d)$. Then for all $j \geq d$ we have $e_j(a) = 0$. Hence on $A(d)$ the series agrees with the polynomial
$$\sum_{j = 0}^{d-1}b_j(\chi_t)\frac{e_j(z)}{D_j}.$$
By Lemmas \ref{lem:ccoeff1} and \ref{lem:ccoeff2} this is exactly $(-1)^d\frac{L_d}{D_d}P_d(\chi_t)(z)$, and by Corollary \ref{cor:linear} this agrees with $\chi_t$ on $A(d)$.
\end{proof}
\end{thm}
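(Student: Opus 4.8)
The plan is to establish the two assertions separately: first that the Wagner series converges at every $z \in \mathbb{C}_\infty$, and second that its sum reproduces $\chi_t$ on $A$. Convergence reduces, via the normalized Carlitz basis $\{e_j(z)/D_j\}$, to a size estimate on the coefficients $b_j(\chi_t)$, which one feeds into Goss' convergence criterion. Interpolation follows by truncating the series on $A(d)$ and identifying the resulting polynomial with $(-1)^d (L_d/D_d) P_d(\chi_t)$ by means of Lemmas~\ref{lem:ccoeff1} and~\ref{lem:ccoeff2}, after which Corollary~\ref{cor:linear} finishes the job.

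For convergence I would first rewrite the governing quantity using Theorem~\ref{chicoeffs}, which gives $b_d(\chi_t)/L_d = (-1)^d \sum_{a \in A_+(d)} \chi_t(a)/a$, so that it suffices to show $|b_d(\chi_t)/L_d| \to 0$. A naive term-by-term bound on $\sum_{a \in A_+(d)} \chi_t(a)/a$ is hopeless: there are $q^d$ summands and the individual terms do not decay fast enough to beat this count, so the essential input is the cancellation recorded in the product formula $b_d(\chi_t) = \prod_{l=0}^{d-1}(t - \theta^{q^l})$. Since $|t| < q = |\theta| \le |\theta^{q^l}|$, every factor satisfies $|t - \theta^{q^l}| = |\theta^{q^l}| = q^{q^l}$, whence $|b_d(\chi_t)| = q^{(q^d-1)/(q-1)}$; dividing by $|L_d| = q^{(q^{d+1}-q)/(q-1)}$ collapses the exponents to give $|b_d(\chi_t)/L_d| = q^{1-q^d} \to 0$ as $d \to \infty$. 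This is exactly the hypothesis needed in \cite{Go-89}, Theorem 3.1.10, which then yields convergence for every $z \in \mathbb{C}_\infty$.

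For interpolation I would fix $a \in A$ and choose $d$ with $a \in A(d)$. Because $e_j$ vanishes on $A(j) \supseteq A(d)$ for every $j \ge d$, all tail terms die and the series collapses to the finite sum $\sum_{j=0}^{d-1} b_j(\chi_t)\, e_j(a)/D_j$. The crux is to recognize the polynomial $\sum_{j=0}^{d-1} b_j(\chi_t)\, e_j(z)/D_j$ as $(-1)^d (L_d/D_d) P_d(\chi_t)(z)$. From the definition $b_j(\chi_t) = (-1)^j (L_j/D_j) M_j(\chi_t)(0)$ together with Lemma~\ref{lem:ccoeff2} one gets $b_j(\chi_t)/D_j = \alpha_{j+1,j}(\chi_t)$, while Lemma~\ref{lem:ccoeff1} forces $\alpha_{d,i}(\chi_t) = \alpha_{i+1,i}(\chi_t)$ for all $i < d$, i.e.\ the coefficients stabilize; comparing $\{e_i\}$-expansions (which are unique) then identifies the truncated sum with $(-1)^d (L_d/D_d) P_d(\chi_t)(z)$. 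Evaluating at $z = a$ and invoking Corollary~\ref{cor:linear}, namely $P_d(\chi_t)(a) = M_d(1)\chi_t(a)$ with $M_d(1) = (-1)^d D_d/L_d$ by Theorem~\ref{carlitzcoeff}, the normalizing constants cancel and the sum equals $\chi_t(a)$, as required.

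I expect the main obstacle to be the convergence estimate, or more precisely the need to bypass the trivial term-wise bound. The cancellation that makes $\sum_{a \in A_+(d)}\chi_t(a)/a$ small is invisible at the level of single summands, so one is forced either to import the closed product form of $b_d(\chi_t)$ (which ultimately rests on Sheats' vanishing criterion \cite{Sh98}) or to carry out an independent valuation computation achieving the same $q^{1-q^d}$ bound. By contrast the interpolation step is largely bookkeeping: once the $\alpha$-coefficients are seen to stabilize, matching the truncation to $P_d(\chi_t)$ is a uniqueness argument in the basis $\{e_i\}$, and the only delicate point is keeping the factors $(-1)^d L_d/D_d$ and their inverses straight so that they cancel precisely.
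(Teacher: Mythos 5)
Your interpolation step is essentially the paper's argument, just written out in more detail: the telescoping of Lemma~\ref{lem:ccoeff1} combined with the identification $b_j(\chi_t)/D_j=\alpha_{j+1,j}(\chi_t)$ from Lemma~\ref{lem:ccoeff2} and the definition of $b_j$, followed by Corollary~\ref{cor:linear} and the cancellation of $M_d(1)=(-1)^dD_d/L_d$, is exactly what the paper does. Where you diverge is the convergence step, and your stated reason for diverging is a misconception: in $\mathbb{C}_\infty$ the absolute value is ultrametric, so the number of summands is irrelevant and the ``naive term-by-term bound'' you dismiss as hopeless in fact works immediately. For $a\in A_+(d)$ one has $|a|=q^d$ while $|\chi_t(a)|=|a(t)|\le\max(|t|,1)^d$, hence
$$\Bigl|\sum_{a\in A_+(d)}\frac{\chi_t(a)}{a}\Bigr|\;\le\;\max_{a\in A_+(d)}\frac{|\chi_t(a)|}{|a|}\;\le\;\Bigl(\frac{\max(|t|,1)}{q}\Bigr)^{d}\longrightarrow 0$$
for $|t|<q$; this is precisely the paper's ``it is easy to see,'' and together with Theorem~\ref{chicoeffs} it gives $b_d(\chi_t)/L_d\to 0$ with no further input. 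Your alternative route via the factorization $b_d(\chi_t)=\prod_{l=0}^{d-1}(t-\theta^{q^l})$ is valid and your valuation computation $|b_d(\chi_t)/L_d|=q^{1-q^d}$ is correct, so the proof you give does go through; it even yields a much sharper decay rate. But it needlessly imports Pellarin's closed-form identity (resting on Sheats' theorem), which the paper only records in a remark \emph{after} this theorem, whereas the elementary ultrametric estimate suffices. You should repair the claim that cancellation is essential here --- it is not --- even if you keep the sharper bound as a bonus.
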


\begin{rem}
The interpolation of $\chi_t$ given above is not evaluation at $t$ away from $z \in A$. This is to be expected as $A$ sits discretely inside the completion with respect to the infinite place $K_\infty$ of the quotient field $K$ of $A$. We demonstrate this now. Let $t \in \mathbb{C}_\infty \setminus \overline{\mathbb{F}}_q$. Then by basic algebra there is a unique ring homomorphism from $K$ to $\mathbb{F}_q(t)$ which extends $\chi_t$, and this map is just evaluation of $f \in K$ at $t$. Let us call this map $\chi_t$ as well. Now, for $|t| < 1$ and $f \in K_\infty$ it makes sense to evaluate $f$ at $t$, and the function $f \mapsto f(t)$ is the unique continuous extension (in the variable $z$) of $\chi_t(z)$ to $K_\infty$, and hence satisfies the functional equation $\chi_t(fg) = \chi_t(f)\chi_t(g)$ for all $f,g \in K_\infty$. Now $\chi_t$ is also additive on $K_\infty$ and so its derivative is constant. Let us call its derivative $a_0$. Now using the functional equation $\chi_t(zw) = \chi_t(z)\chi_t(w)$ which is valid on all of $K_\infty$, and differentiating with respect to $z$ we obtain $a_0 w = a_0\chi_t(w)$, and hence if $a_0 \neq 0$ we conclude that $\chi_t$ is the identity function, a contradiction. Thus $\chi_t$ cannot have a power series representation which is valid on all of $K_\infty$ and for which $a_0 \neq 0$. As a corollary of this: since $\beta_{0}(\chi_t)$ can be shown to equal $1$, the extension of $\chi_t$ from $A$ to all of $\mathbb{C}_\infty$ given in the proposition above cannot be evaluation at $t$ on $K_\infty$.
\end{rem}

\bibliographystyle{plain}

\hfill \email{perkins@math.osu.edu}
\end{document}